\newcommand\R{\mathbb R}
\newcommand\x{\times}
\newcommand\pd[2]{\frac{\partial #1}{\partial #2}}
\DeclareMathOperator{\grad}{grad}
\DeclareMathOperator{\curl}{curl}
\DeclareMathOperator{\dist}{dist}
\DeclareMathOperator{\diam}{diam}
\let\div\undefined
\DeclareMathOperator{\div}{div}
\newcommand{\exd}{d}
\newcommand\T{\mathcal T}
\newcommand{\<}{\langle}
\renewcommand{\>}{\rangle}
\newtheorem{thm}{Theorem}[section]
\newtheorem{defn}[thm]{Definition}
\newtheorem{lem}[thm]{Lemma}
\theoremstyle{remark}
\newtheorem{remark}[thm]{Remark}
\numberwithin{equation}{section}
\begin{document}

\title{On the consistency\\of the combinatorial codifferential}
\author[D. N. Arnold]{Douglas N. Arnold}
\address{School of Mathematics, University of Minnesota, Minneapolis, Minnesota 55455}
\email{arnold@umn.edu}
\thanks{The work of the first author was supported by NSF grant DMS-1115291.}

\author[R. S. Falk]{Richard S. Falk}
\address{Department of Mathematics, Rutgers University, Piscataway, New Jersey 08854}
\email{falk@math.rutgers.edu}
\thanks{The work of the second author was supported by NSF grant DMS-0910540.}

\author[J. Guzm\'an]{Johnny Guzm\'an}
\address{Division of Applied Mathematics, Brown University, Providence, Rhode Island 02912}
\email{johnny\_guzman@brown.edu}

\author[G. Tsogtgerel]{Gantumur Tsogtgerel}
\address{Department of Mathematics and Statistics, McGill University, Montreal, Quebec, Canada  H3A 0B9}
\email{gantumur@math.mcgill.ca}
\thanks{The work of the fourth author was supported by an NSERC Discovery Grant and an FQRNT Nouveaux Chercheurs Grant.}

\subjclass[2010]{Primary 58A10, 65N30; Secondary 39A12, 57Q55, 58A14}

\keywords{consistency, combinatorial codifferential, Whitney form, finite element}

\date{December 18, 2012}

\begin{abstract}
In 1976, Dodziuk and Patodi employed
Whitney forms to define a combinatorial codifferential operator
on cochains, and  they raised the question whether
it is consistent
in the sense that for a smooth enough differential form
the combinatorial codifferential of the associated cochain converges
to the exterior codifferential of the form as the triangulation is refined.  In 1991,
Smits proved this to be the case for the combinatorial codifferential applied
to $1$-forms in two dimensions under the additional assumption that the initial triangulation
is refined in a completely regular fashion, by dividing each triangle into
four similar triangles.  In this paper we extend the
result of Smits to arbitrary dimensions, showing that the combinatorial
codifferential on $1$-forms is consistent if the triangulations are uniform or
piecewise uniform in a certain precise sense.  We also show that this restriction on the
triangulations is needed, giving a counterexample in which a
different regular refinement procedure, namely
Whitney's standard subdivision, is used.  Further, we show by numerical example
that for $2$-forms in three dimensions, the combinatorial codifferential is not
consistent, even for the most regular subdivision process.
\end{abstract}

\maketitle

\section{Introduction}\label{s:introduction}

Let $M$ be an $n$-dimensional polytope in $\R^n$, triangulated by a simplicial complex $\T_h$
of maximal simplex diameter $h$,
which we orient by fixing an order for the vertices.
(Although we restrict ourselves to polytopes for simplicity,
several of the results below can easily be extended to triangulated Riemannian manifolds.)  We
denote by $\Lambda^k=\Lambda^k(M)$ the space of smooth differential $k$-forms
on $M$.  The Euclidean inner product restricted to $M$
determines the Hodge star operator $\Lambda^k\to \Lambda^{n-k}$, and the inner product
on $\Lambda^k$ given by $\< u,v\>=\int u\wedge \star v$.  The space
$L^2\Lambda^k$ is the completion of $\Lambda^k$ with respect to this norm,
i.e., the space of differential $k$-forms with coefficients in $L^2$.
We then define $H\Lambda^k$ to be the space of forms $u$ in $L^2\Lambda^k$ whose exterior derivative
$\exd u$, which may be understood in the sense of distributions,
belongs to $L^2\Lambda^{k+1}$.  These spaces combine to form the $L^2$ de~Rham complex
$$
0\to H\Lambda^0\xrightarrow{\exd} H\Lambda^1\xrightarrow{\exd}\cdots\xrightarrow{\exd} H\Lambda^n\to 0.
$$
Viewing the exterior derivative $\exd$ as an unbounded operator $L^2\Lambda^k$
to $L^2\Lambda^{k+1}$ with domain $H\Lambda^k$, we may define its adjoint
$\exd^*$.  Thus a differential $k$-form $u$ belongs
to the domain of $\exd^*$ if the operator $v\mapsto \<u,\exd v\>_{L^2\Lambda^k}$ is
bounded on $L^2\Lambda^{k-1}$, and then
$$
\<\exd^* u,v\>_{L^2\Lambda^{k-1}} = \<u,\exd v\>_{L^2\Lambda^k}, \quad v\in H\Lambda^{k-1}.
$$
In particular, every $u$ which is smooth and supported in the interior of $M$ belongs
to the domain of $\exd^*$ and $\exd^* u= (-1)^{k(n-k+1)}\star \exd\star u$.

Let $\Delta_k(\T_h)$ denote the set of $k$-dimensional simplices of $\T_h$.
We denote by $C_k(\T_h)$ the space of formal linear combinations of elements
of $\Delta_k(\T_h)$ with real coefficients, the space of $k$-chains, and
by $C^k(\T_h)=C_k(\T_h)^*$ the space of $k$-cochains.
The coboundary maps $d^c:C^k(\T_h)\to C^{k+1}(\T_h)$ then determine the cochain complex.
The \emph{de~Rham map} $R_h$ maps $\Lambda^k$
onto $C^k(\T_h)$ taking a differential $k$-form $u$ to the cochain
\begin{equation}\label{dRm}
R_hu: C_k(\T_h)\to \R, \quad c\mapsto \int_c u.
\end{equation}
The canonical basis for $C^k(\T_h)$
consists of the cochains $a_\tau$, $\tau\in\Delta_k(\T_h)$, where $a_\tau$ takes
the value $1$ on $\tau$ and zero on the other elements of $\Delta_k(\T_h)$.
The associated \emph{Whitney form} is given by
$$
W_h a_\tau = k!\sum_{i=0}^k (-1)^i
  \lambda_i\, d\lambda_0\wedge\cdots\wedge\widehat{d\lambda_i}\wedge\cdots\wedge d\lambda_k,
$$
where $\lambda_0,\ldots,\lambda_k$ are the piecewise linear basis functions associated
to the vertices of the simplex listed, i.e., $\lambda_i$ is the continuous piecewise linear function
equal to $1$ at the $i$th vertex of $\tau$ and vanishing at all the other vertices of the
triangulation.  The span of $W_ha_\tau$, $\tau\in\Delta_k(\T_h)$, defines the space of $\Lambda^k_h$
of Whitney $k$-forms.  Its elements are piecewise affine differential $k$-forms
which belong to $H\Lambda^k$ and
satisfy $\exd \Lambda^k_h\subset \Lambda^{k+1}_h$.  Thus the Whitney forms comprise
a finite-dimensional subcomplex of the $L^2$ de~Rham complex called the Whitney complex:
$$
0\to \Lambda^0_h\xrightarrow{\exd} \Lambda^1_h\xrightarrow{\exd}\cdots\xrightarrow{\exd} \Lambda^n_h\to 0.
$$
The \emph{Whitney map} $W_h$ maps $C^k(\T_h)$ isomorphically onto $\Lambda^k_h$ and satisfies
\begin{equation}\label{ccm}
 W_hd^c c = \exd W_h c, \quad c\in C^k(\T_h), 
\end{equation}
i.e., is a cochain isomorphism of
the cochain complex onto the Whitney complex.  Although Whitney $k$-forms need not be continuous,
each has a well-defined trace on the simplices in $\Delta_k(\T_h)$, so the de~Rham map
\eqref{dRm} is defined for $u\in\Lambda^k_h$.  The Whitney map is a one-sided inverse
of the de~Rham map: $R_hW_h c = c$ for $c\in C^k(\T_h)$.  The reverse
composition $\pi_h=W_hR_h:\Lambda^k\to \Lambda^k_h$
defines the \emph{canonical projection} into $\Lambda^k_h$.

In \cite{dodziuk} and \cite{dodziuk-patodi},
Dodziuk and Patodi defined an inner product on cochains by declaring the Whitney
map to be an isometry:
\begin{equation}\label{isom}
\< a,b\> = \<W_h a, W_h b\>_{L^2\Lambda^k}, \quad a,b\in C^k(\T_h).
\end{equation}
They then used this inner product to define the adjoint $\delta^c$ of the coboundary:
\begin{equation}\label{cochainadj}
\<\delta^c a,b\> = \<a, d^c b\>, \quad  a,b\in C^k(\T_h).
\end{equation}
Since the coboundary operator $d^c$ may be viewed as a combinatorial version of
the differential operator of the de~Rham complex, its adjoint $\delta^c$ may be
viewed as a combinatorial codifferential, and together they define
the combinatorial Laplacian on cochains given by
$$
\Delta^c= d^c\delta^c+\delta^c d^c:C^k(\T_h)\to C^k(\T_h).
$$
The work of Dodziuk and Patodi
concerned the relation between the eigenvalues of this combinatorial
Laplacian and those of the Hodge Laplacian.

Dodziuk and Patodi asked whether the combinatorial codifferential $\delta^c$ is
a consistent approximation of $\exd^*$ in the sense that if we have a sequence
of triangulations $\T_h$ with maximum simplex diameter tending to zero and
satisfying some regularity restrictions, then
\begin{equation}\label{cochainconsist}
\lim_h \|W_h \delta^c R_hu - \exd^* u\|=0,
\end{equation}
for sufficiently smooth $u\in \Lambda^k$ belonging to the domain of $\exd^*$.  Here and henceforth
the norm $\|\,\cdot\,\|$ denotes the $L^2$ norm.

Since $C^k(\T_h)$ and $\Lambda^k_h$ are isometric, we may state this question
in terms of Whitney forms, without invoking cochains.
Define the Whitney codifferential $\exd^*_h:\Lambda^k_h\to \Lambda^{k-1}_h$ by
\begin{equation}\label{adj}
\<\exd^*_h u, v\>_{L^2\Lambda^{k-1}} = \<u,\exd v\>_{L^2\Lambda^k}, \quad u \in \Lambda^k_h,\ v\in \Lambda^{k-1}_h.
\end{equation}
Combining \eqref{ccm}, \eqref{isom}, and \eqref{cochainadj}, we see that $\exd^*_h = W_h\delta^c W_h^{-1}$.
Therefore, $W_h\delta^c R_h\\= d^*_h\pi_h$, and the question of consistency becomes whether
\begin{equation}\label{consist}
\lim_h \|\exd^*_h\pi_hu - \exd^* u\|=0,
\end{equation}
for smooth $u$ in the domain of $\exd^*$.

In Appendix~II of \cite{dodziuk-patodi}, the authors suggest a counterexample to \eqref{consist}
for $1$-forms (i.e., $k=1$) on a two-dimensional manifold, but, as pointed out by Smits \cite{smits},
the example is not valid, and the question has remained open.  Smits himself considered the question,
remaining in the specific case of $1$-forms on a two-dimensional manifold, and restricting
himself to a sequence of triangulations obtained by \emph{regular standard subdivision}, meaning that
the triangulation is refined by dividing each triangle into four similar triangles by connecting
the midpoints of the edges, resulting in a piecewise uniform sequence of triangluations.
See Figure~\ref{f:puniform} for an example.
In this case, Smits proved that \eqref{cochainconsist} or, equivalently,
\eqref{consist} holds.

Smits's result leaves open various questions.
Does the consistency of the $1$-form codifferential on regular meshes in two dimensions extend to
\begin{itemize}
 \item Mesh sequences which are not obtained by regular standard subdivision?
 \item More than two dimensions?
 \item The combinatorial codifferential on $k$-forms with $k>1$?
\end{itemize}
In this paper we show that the answer to the second question is affirmative, but the answers
to the first and third are negative.  More precisely, in Section~\ref{s:counterexample}
we present a simple counterexample to consistency for a quadratic $1$-form
on the sequence of triangulations shown in 
Figure~\ref{f:crisscross}.  While these meshes are not obtained by regular
standard subdivision, they may be obtained by another systematic subdivision process,
\emph{standard subdivision}, as defined by Whitney in \cite[Appendix II, \S~4]{whitney}.
Next, in Section~\ref{s:superconv}, we recall a definition of \emph{uniform} triangulations in
$n$-dimensions which was formulated in the study of superconvergence of finite element
methods, and we use the superconvergence theory to extend Smits's result on
the consistency of the combinatorial codifferential on $1$-forms to $n$-dimensions, for
triangulations that are uniform or piecewise uniform.  In Section~\ref{s:experiments},
we provide computational confirmation of these results, both positive and negative.
Finally, in Section~\ref{s:2-forms}, we numerically explore
the case of $2$-forms in three dimensions and find that the combinatorial
codifferential is inconsistent, even
for completely uniform mesh sequences.

\section{A counterexample to consistency}\label{s:counterexample}
We take as our domain $M$ the square $(-1,1)\x (-1,1)\subset \R^2$, and
as initial triangulation the division into four triangles obtained
via drawing the two diagonals.  We refine a triangulation by
subdividing each triangle into four using standard subdivision.  In this
way we obtain the sequence of \emph{crisscross triangulations} shown in Figure~\ref{f:crisscross},
with the $m$th triangulation consisting of $4^m$ isoceles right triangles.
We index the triangulation by the diameter of its elements, so we denote
the $m$th triangulation by $\T_h$ where $h=4/2^m$. Using this triangulation, the authors of \cite{DMR91} showed that superconvergence does not hold for piecewise linear Lagrange elements.  
\begin{figure}[htb]
\centerline{%
 \begin{tabular}{cc}
  \includegraphics[width=1.2in]{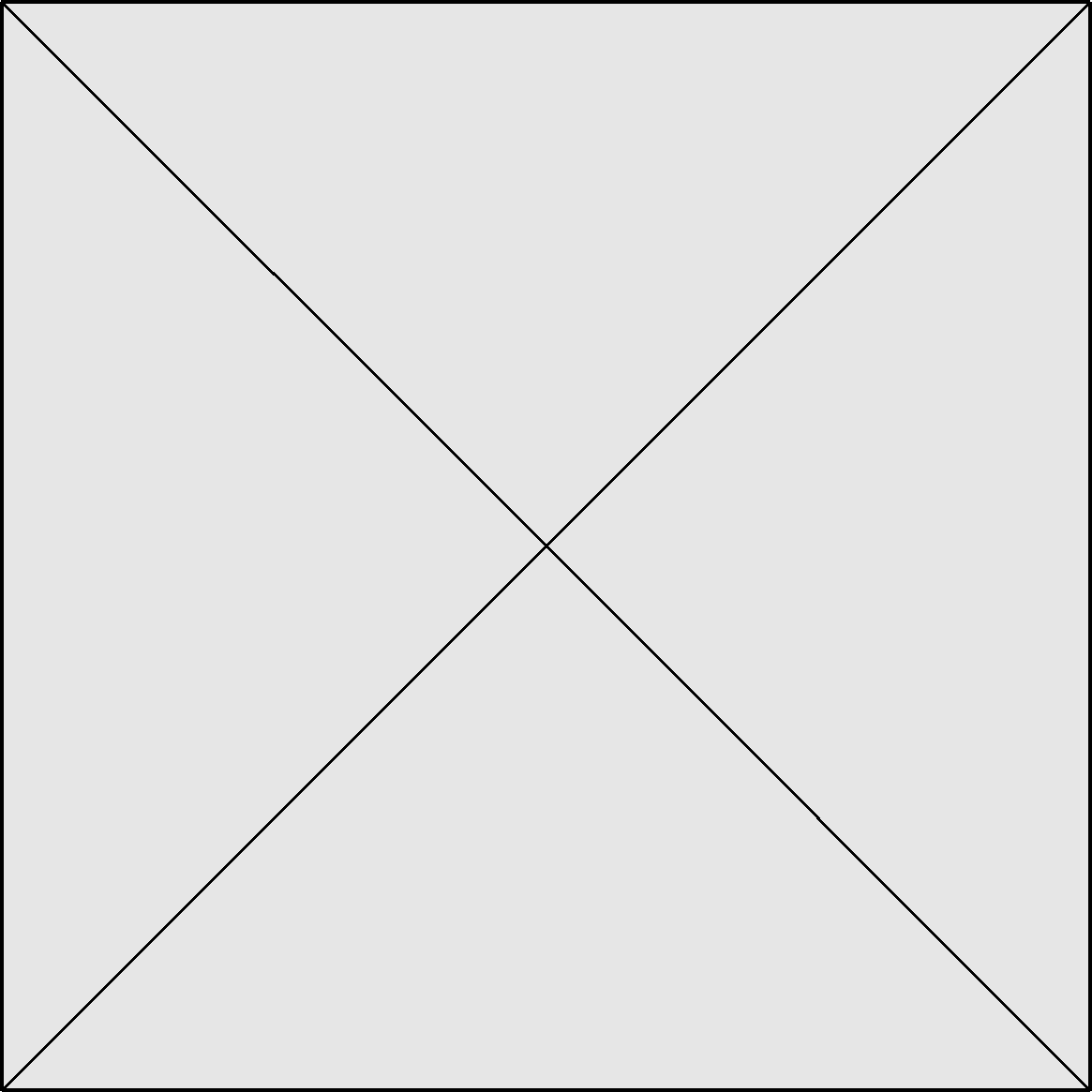} &
  \includegraphics[width=1.2in]{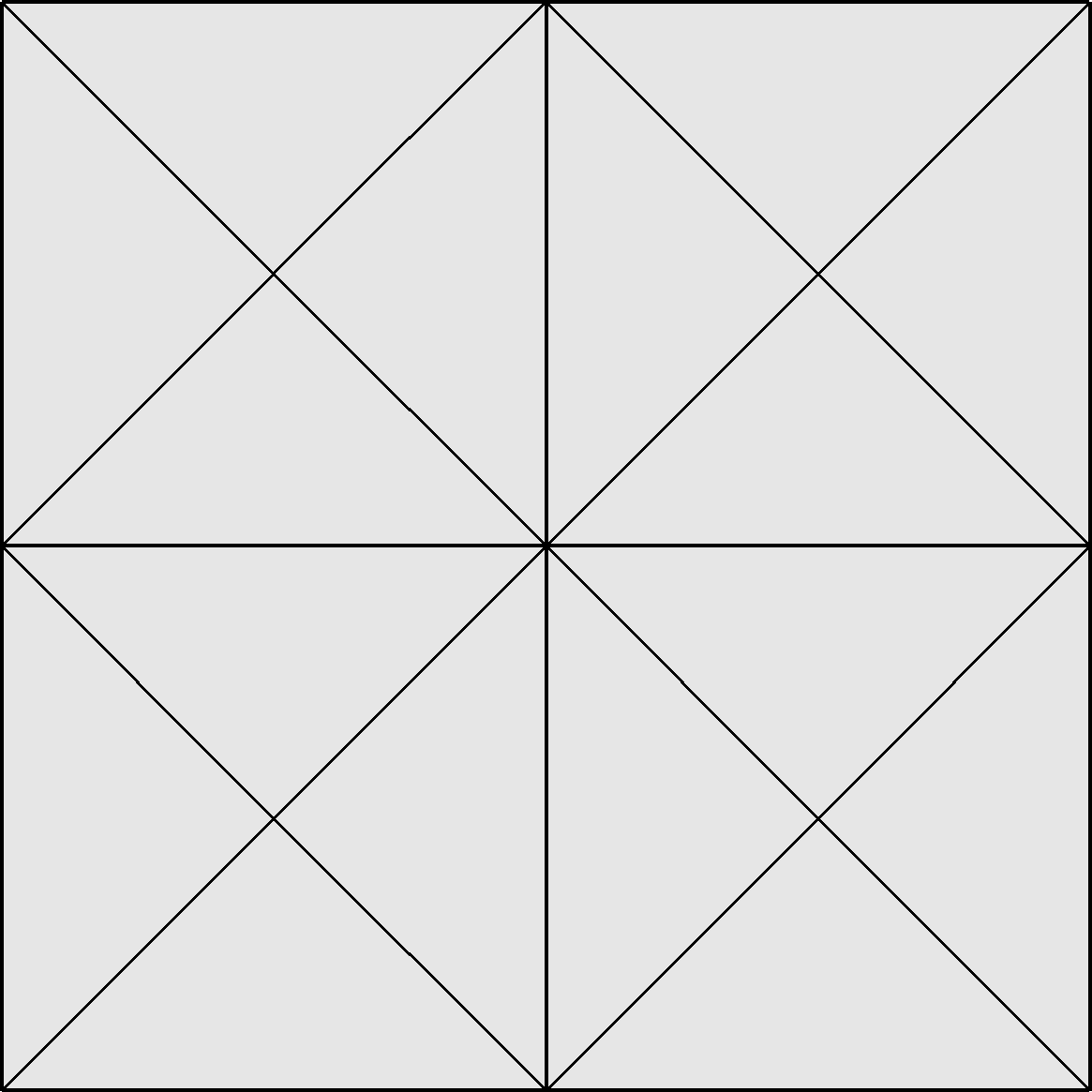} \\
  \includegraphics[width=1.2in]{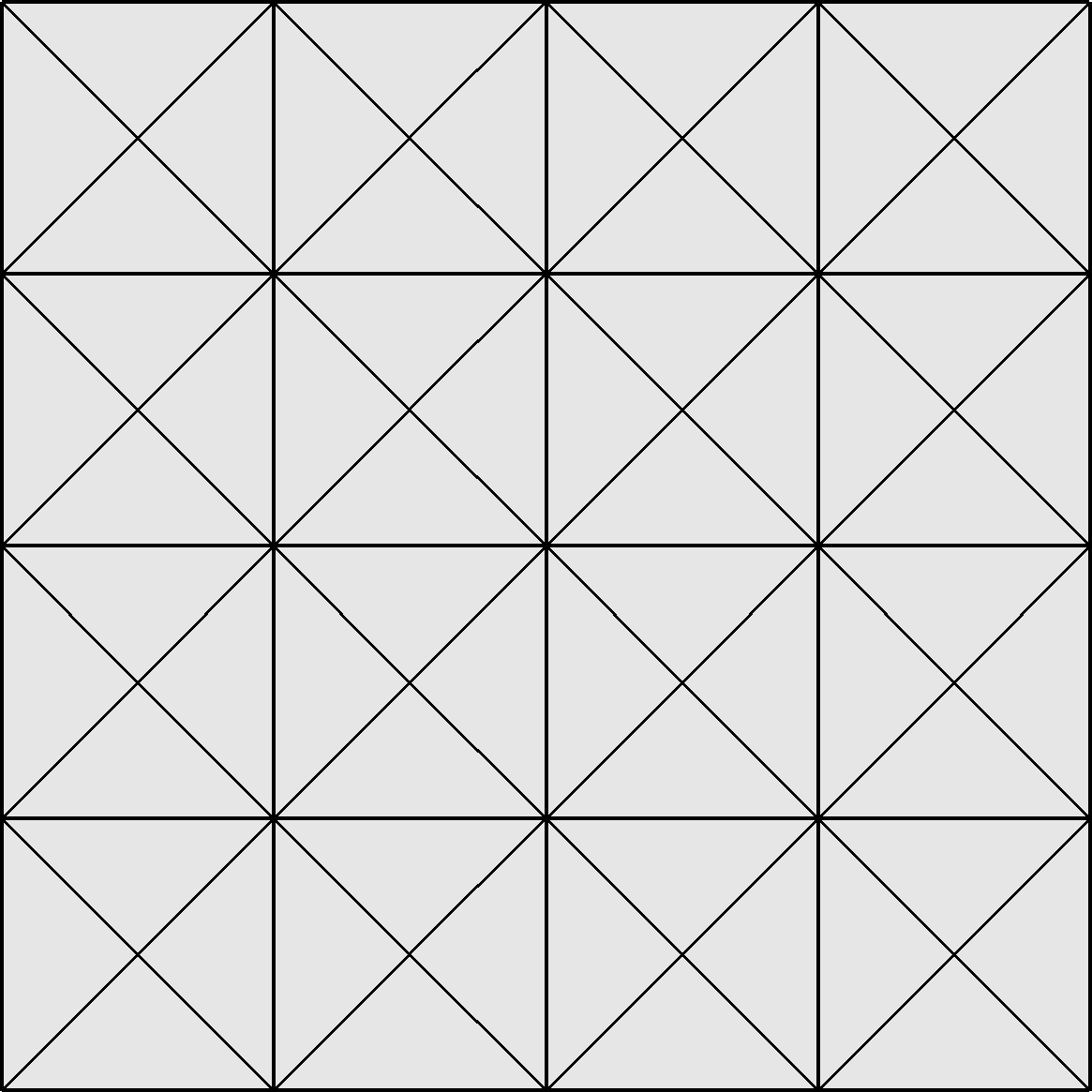} &
  \includegraphics[width=1.2in]{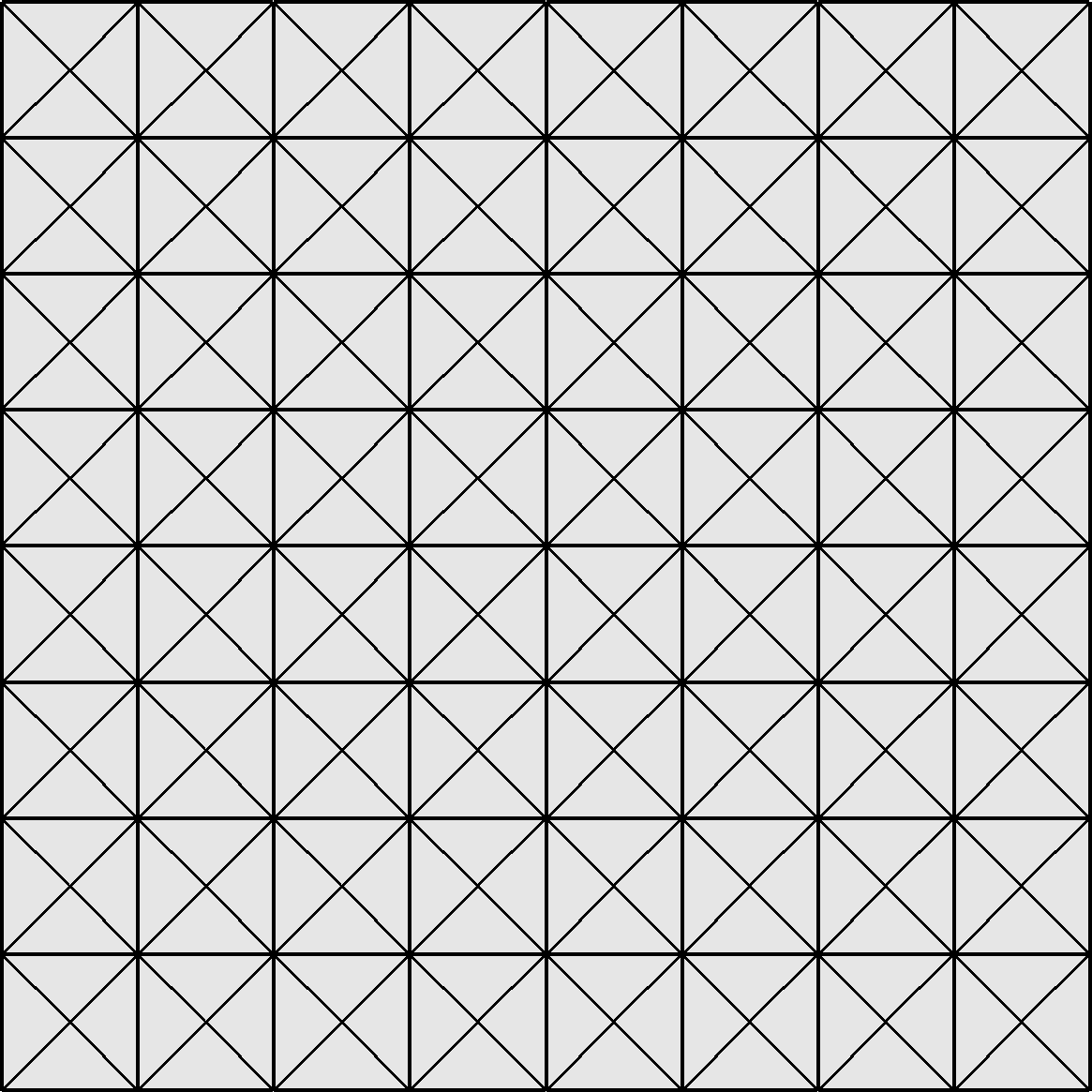} 
 \end{tabular}}
\caption{$\T_2$, $\T_1$, $\T_{1/2}$, $\T_{1/4}$, the first four crisscross triangulations.}\label{f:crisscross}
\end{figure}

Define $p:M\to\R$ by $p(x,y)=x-x^3/3$ and let $u=dp=(1-x^2)dx\in \Lambda^1(M)$.
Now for $q\in H\Lambda^0(M)$ (i.e., the Sobolev space $H^1(M)$), we have
$$
\star dq = \star\left(\frac{\partial q}{\partial x} dx + \frac{\partial q}{\partial y} dy\right)
= \frac{\partial q}{\partial x} dy - \frac{\partial q}{\partial y} dx,
$$
so
$$
\< u, dq\>_{L^2\Lambda^1}=\int_M u\wedge\star dq = \int_M (1-x^2)\frac{\partial q}{\partial x}\,dx\,dy 
= \int_M 2x q\,dx \,dy = \<2x,q\>_{L^2\Lambda^0}.
$$
Thus $u$ belongs to the domain of $d^*$ and
$d^*u = 2x$.  As an alternative verification, we may
identify $1$-forms and vector fields.  Then $u$ corresponds to the vector field $(1-x^2,0)$
which has vanishing normal component on $\partial M$, and so belongs to the domain of $d^*=-\div$
and $d^*u = -\div(1-x^2,0)=2x$.

Set $w_h =d_h^*\pi_hu.$  Now $w_h\in\Lambda^0_h$, i.e., it is a continuous piecewise linear
function.  The projections $\pi_h$ into the Whitney forms form a cochain map,
so $\pi_hu = \pi_h dp = d \pi_hp=\grad \pi_hp$, where $\pi_h p$ is piecewise linear interpolant
of $p$.  Thus $w_h\in \Lambda^0_h$ is determined by the equations
\begin{equation}\label{dlap}
\int_M w_h q\,dx\,dy = \int_M \grad \pi_h p\cdot \grad q\, dx\,dy, \quad q\in \Lambda^0_h.
\end{equation}
It turns out that we can give the solution to this problem explicitly.
Since $w_h$ is a continuous piecewise linear function, it
is determined by its values at the vertices of the triangulation $\T_h$.  The coordinates
of the vertices are integer multiples of $h/2$.  In fact the value of $w_h$ at
a vertex $(x,y)$ depends only on $x$ and for $h\le 1$ is given by
$$
w_h(x,y) = \begin{cases}
  -h, & x=-1, \\
  0, & -1<x<1, \ \text{$x$ a multiple of $h$},\\
  h, & x = 1,\\
  -6+2h, & x=-1+h/2,\\
  6x, & -1 + h/2 < x < 1-h/2, \ \text{$x$ an odd multiple of $h/2$},\\
  6-2h, & x = 1-h/2.          
           \end{cases}
$$
A plot of the piecewise linear function
$w_h$ is shown in Figure~\ref{f:cplot} for $h = 1/2$.  To verify the formula
it suffices to check \eqref{dlap} for all piecewise linear functions $q$ that vanish on all vertices except one.  There are several cases depending on how close the vertex is to the boundary, and
the computation is tedious, but elementary. Here we only give the details when the vertex is $(x,y)$ with $-1+h/2 <x<1-h/2$ and $x$ is an odd multiple of $h/2$. To this end, let $q$ be the piecewise linear function that is one on vertex $(x,y)$  and vanishes on all the remaining vertices. In this case, the support of  $q$ is the union of the four triangles $T_1, T_2, T_3, T_4$ that have $(x,y)$ as a vertex (see Figure \ref{f:triangles}). According to the formula, in the support of $q$, one has $w_h= 6\, x \, q$. A simple calculation then shows that the left-hand side of \eqref{dlap} is 
\begin{equation*}
\int_M w_h q\,dx\,dy = 6\, x \sum_{i=1}^4 \int_{T_i} q^2 dx\, dy= 4\,x\,m,
\end{equation*}
where $m=h^2/4=|T_i|$ for any $i$.

To calculate the right-hand side of \eqref{dlap} for this $q$, we calculate that
$$
\grad q = \frac{2}{h} \begin{cases}
  (1, 0), & \text{ on } T_1, \\
   (0,1), & \text{ on } T_2,\\
   (-1, 0), & \text{ on } T_3,\\
   (0,-1), & \text{ on } T_4,
           \end{cases}
$$
and
$$
\grad \pi_h p= \frac{2}{h} \begin{cases}
  (p(x)-p(x-\frac{h}2), 0), & \text{ on } T_1, \\
   (\frac{1}{2}[p(x+\frac{h}{2})-p(x-\frac{h}{2})],p(x)-\frac{1}{2}[p(x+\frac{h}{2})+p(x-\frac{h}{2})]), & \text{ on } T_2,\\
   (p(x+\frac{h}{2})-p(x), 0), & \text{ on } T_3,\\
   (\frac{1}{2}[p(x+\frac{h}{2})-p(x-\frac{h}{2})],\frac{1}{2}[p(x+\frac{h}{2})+p(x-\frac{h}{2})]-p(x)), & \text{ on } T_4.
           \end{cases}
$$
Hence, 

\begin{equation*}
\begin{split}
\int_M \grad \pi_h p\cdot \grad q\, dx\,dy 
&= \sum_{i=1}^4 \int_{T_i} \pi_h p\cdot \grad q\, dx\,dy \\
&= \frac{16} {h^2} (p(x)-\frac{1}{2}[p(x-\frac{h}{2})+p(x+\frac{h}{2})]) m
=4 \, x m.
\end{split}
\end{equation*}
This verifies \eqref{dlap} for this piecewise linear function $q$.  

\begin{figure}[htb]
\centerline{%
\setlength{\unitlength}{.8in}
\begin{picture}(2.4,2.4)
 \linethickness{.5pt}
 \put(0, 0){\line(1, 0){2}}
 \put(0, 0){\line(0, 1){2}}
 \qbezier(0, 0)(1,1)(2,2)
 \qbezier(2, 0)(1,1)(0,2)
 \put(2, 0){\line(0, 1){2}}
 \put(0, 2){\line(1, 0){2}}
 \put(1.1, .95){$(x,y)$}
 \put(-.8,-.2){$(x-h/2,y-h/2)$}
 \put(1.2,2.1){$(x+h/2,y+h/2)$}
 \put(.3, .9){$T_1$}
 \put(.9, .3){$T_2$}
 \put(1.7, .9){$T_3$}
 \put(.9, 1.6){$T_4$}
\end{picture}}
\vspace{10pt}
\caption{The support of the piecewise linear function $q$.}\label{f:triangles}
\end{figure}
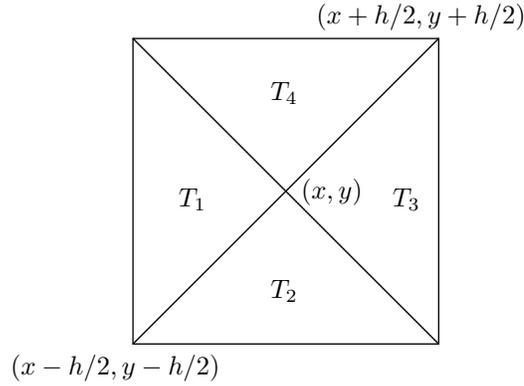

\begin{figure}[htb]
\centerline{%
 \begin{tabular}{cccc}
  \includegraphics[width=3in]{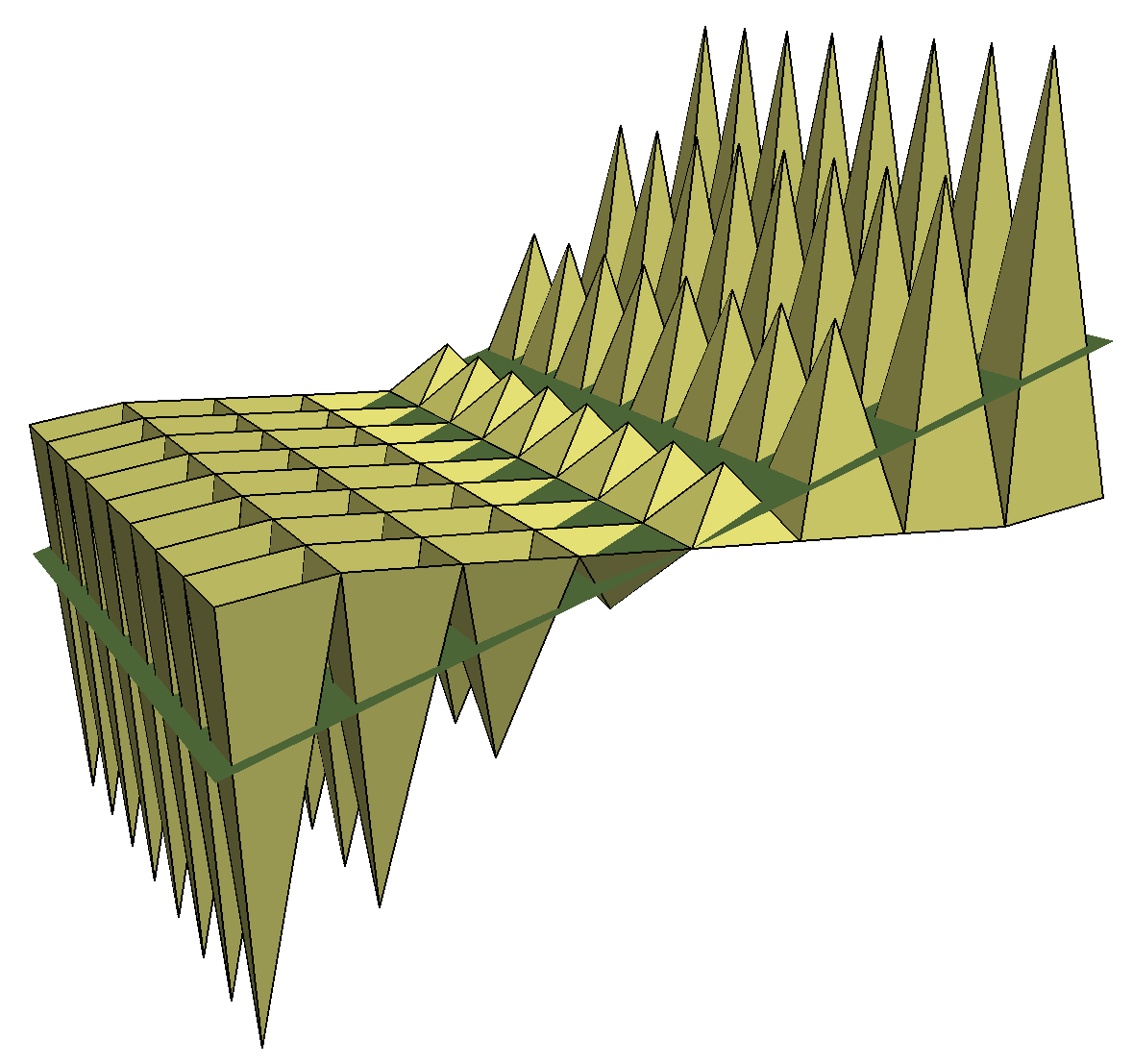} 
 \end{tabular}}
\caption{The spiked surface is the graph of the piecewise linear function $w_h=d^*_h \pi_h f$ for $h=1/4$.
The plane is the graph of the linear
function $d^*u$.}\label{f:cplot}
\end{figure}

Finally, we note that, since
$w_h$ essentially oscillates between $6x$ and $0$, it does not converge in $L^2$ to  $d^*u$ (or to anything
else) as $h$ tends to zero.

\section{Consistency for $1$-forms on piecewise uniform meshes}\label{s:superconv}
We continue to consider a sequence of triangulations $\T_h$ indexed by a positive parameter
$h$ tending to $0$.  We take $h$ to be equivalent to the maximal
simplex diameter
$$
c h \le \max_{T\in\Delta_n(\T_h)}\diam T \le C h,
$$
for some positive constants $C,c$ independent of $h$ (throughout we denote by $C$ and $c$
generic constants, not necessarily the same in different occurrences).  We also assume that the sequence of triangulations is \emph{shape
regular} in the sense that there exists $c>0$ such that
$$
\rho(T) \ge c \diam T,
$$
for all $T\in\T_h$ and all $h$, 
where $\rho(T)$ is the diameter of the ball inscribed in $T$.

We begin with some estimates for the approximation of a $k$-form by an element of $\Lambda^k_h$.
For this we need to introduce the spaces of differential forms with coefficients in a Sobolev space.
Let $m$ be a non-negative integer and $u$ a $k$-form defined on a domain $M\subset\R^n$, which
we may expand as
\begin{equation}\label{kform}
u= \sum_{1\le i_1<\cdots<i_k\le n} u_{i_1\cdots i_k}\,dx^{i_1}\wedge\cdots\wedge dx^{i_k}.
\end{equation}
Using multi-index notation for partial derivatives of the coefficients $u_{i_1\cdots i_k}$,
we define the $m$th Sobolev norm and seminorm by
\begin{align*}
\|u\|_{H^m\Lambda^k}^2 &= \sum_{1\le i_1<\cdots<i_k\le n}\ \sum_{|\alpha|\le m} \|D^\alpha u_{i_1\cdots i_k}\|_{L^2(M)}^2,
\\
|u|_{H^m\Lambda^k}^2 &= \sum_{1\le i_1<\cdots<i_k\le n}\ \sum_{|\alpha|= m} \|D^\alpha u_{i_1\cdots i_k}\|_{L^2(M)}^2,
\end{align*}
and define the space $H^m\Lambda^k(M)$ to consist of all $k$-forms in $M$ for which
the Sobolev norm $\|u\|_{H^m\Lambda^k}$ is finite.

With this notation, we can state the basic approximation result that for any
shape regular sequence of triangulations there is a constant $C$ such that
\begin{equation}\label{e:H1approx}
\inf_{v\in \Lambda^k_h} \|u-v\| \le C h \|u\|_{H^1\Lambda^k}, \quad u\in H^1\Lambda^k(M).
\end{equation}
For a proof, see \cite[Theorem~5.8]{afw-bull}.  Since $H^1\Lambda^k$ is dense in $L^2\Lambda^k$,
this implies that
\begin{equation}\label{approx}
\dist(f,\Lambda^k_h) := \inf_{v\in \Lambda^k_h}\|f-v\| \to 0  \text{ as $h\to0$}, \quad f\in L^2\Lambda^k(M).
\end{equation}

In addition to the best approximation estimate \eqref{e:H1approx}, we also need
an $O(h)$ estimate on the projection error $\|u-\pi_h u\|$.   For this we require
more regularity of $u$, since $\pi_h u$ is defined in terms of traces of $u$ on $k$-dimensional faces, which
need not be defined on $H^1\Lambda^k$.
\begin{lem}
Let $\{\T_h\}$ be a shape regular sequence of triangulations of $M\subset\R^n$ and $k$ an integer between
$0$ and $n$.  Let $\ell$ be the smallest integer so that $\ell>(n-k)/2$. Then there exists a constant $C$,
depending only on $n$ and the shape regularity constant, such that
\begin{equation}\label{proj-estimate}
\|\pi_h u-u\|_{L^2\Lambda^k} \le C \, \sum_{m=1}^{\ell} h^m|u|_{H^m\Lambda^k}, \quad u\in H^\ell\Lambda^k(M).
\end{equation} 
\end{lem}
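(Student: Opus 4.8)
The plan is to prove the estimate \eqref{proj-estimate} by a standard Bramble–Hilbert scaling argument, exploiting the fact that the canonical projection $\pi_h = W_h R_h$ is defined locally and reproduces Whitney forms. First I would recall that $\pi_h$ commutes with pullback under affine maps and is a projection onto $\Lambda^k_h$, so that $\pi_h v = v$ for every $v \in \Lambda^k_h$; in particular it reproduces the constant-coefficient $k$-forms $dx^{i_1}\wedge\cdots\wedge dx^{i_k}$, and more generally all Whitney forms. The key point is that $\pi_h u$ on a simplex $T$ is determined solely by the integrals of $u$ over the $k$-faces of $T$, so the local error $\|\pi_h u - u\|_{L^2(T)}$ can be controlled by a local bound on $u$. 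The threshold $\ell > (n-k)/2$ appears precisely because the trace of a form on a $k$-dimensional face is well defined on $H^\ell$ only when the Sobolev embedding $H^\ell(T) \hookrightarrow C^0$, restricted to the $k$-codimensional subspace, holds; since the faces have codimension $n-k$, one needs $\ell > (n-k)/2$ to make the integrals $\int_\sigma u$ bounded by $\|u\|_{H^\ell(T)}$.

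The main step is the local estimate on a reference simplex $\hat T$. On $\hat T$ I would use the continuity of the de~Rham map $R_{\hat T}$ from $H^\ell\Lambda^k(\hat T)$ into cochains, which follows from the trace/embedding inequality just described, together with the boundedness of the Whitney map $W_{\hat T}$ into $L^2$. This gives $\|\pi_{\hat T}\hat u - \hat u\|_{L^2(\hat T)} \le C\|\hat u\|_{H^\ell(\hat T)}$ for a constant $C$ depending only on $\hat T$ and $n$. Because $\pi_{\hat T}$ reproduces all polynomial $k$-forms of degree strictly less than $\ell$ (it certainly reproduces Whitney forms, which include the constants, and by the affine-commutativity one can subtract any such polynomial form), the Bramble–Hilbert lemma upgrades this to
\begin{equation*}
\|\pi_{\hat T}\hat u - \hat u\|_{L^2(\hat T)} \le C \sum_{m=1}^{\ell} |\hat u|_{H^m(\hat T)}.
\end{equation*}

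Next I would transfer this to a general simplex $T$ of the shape-regular family by the affine map $F:\hat T \to T$ with $F(\hat x)=B\hat x + b$. Since $\pi_h$ commutes with the pullback $F^*$ acting on $k$-forms, the local error on $T$ equals the pullback of the local error on $\hat T$, and the scaling of the $L^2$ and $H^m$ seminorms of differential $k$-forms under $F^*$ introduces the factors $|\det B|$ and powers of $\|B\|$ and $\|B^{-1}\|$. Shape regularity bounds these operator norms by $\diam T \sim h$ and its inverse, and the Jacobian factors cancel between the two sides up to the constant. Careful bookkeeping of how the exterior power of $k$-forms transforms (each factor of $dx$ contributing one power of $\|B^{-1}\|$ and the volume element one factor of $|\det B|$) yields the local estimate
\begin{equation*}
\|\pi_h u - u\|_{L^2(T)} \le C \sum_{m=1}^{\ell} h^m |u|_{H^m\Lambda^k(T)}.
\end{equation*}
Finally I would square, sum over all $T \in \Delta_n(\T_h)$, use that the $H^m$ seminorms are additive over the triangulation, and apply the discrete Cauchy–Schwarz inequality (the sum has only $\ell$ terms, so a fixed constant suffices) to assemble the global bound \eqref{proj-estimate}.

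The step I expect to be the main obstacle is the scaling analysis for differential $k$-forms in general dimension: unlike scalar functions, a $k$-form transforms by the full $k$-th exterior power $\Lambda^k B^{-*}$ of the inverse Jacobian, so both the coefficients and the basis covectors scale, and one must verify that the combined powers of $\|B\|$, $\|B^{-1}\|$, and $|\det B|$ produce exactly the factor $h^m$ and that shape regularity controls them uniformly. Tracking these exponents correctly — and confirming that the anisotropy of $B$ is harmless under shape regularity — is the delicate part; the rest is a routine Bramble–Hilbert argument.
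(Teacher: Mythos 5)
Your overall strategy is the same as the paper's: a local estimate on the reference simplex obtained from the Sobolev embedding (valid precisely because $\ell>(n-k)/2$ makes the traces on $k$-faces well defined), followed by subtraction of a polynomial form to pass to seminorms, affine scaling to a general simplex, and summation over the triangulation. The scaling bookkeeping you flag as the delicate step is handled in the paper in exactly the way you describe: the pullback formula for $F^*u$, each partial derivative $\partial F^{i_p}/\partial\hat x^{j_q}$ bounded by $h$, and factors of $(\operatorname{vol}T)^{\pm1/2}$ that cancel between the two sides under shape regularity.

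There is, however, one false intermediate claim: for $0<k<n$ the projection $\pi_{\hat T}$ does \emph{not} reproduce all polynomial $k$-forms of degree strictly less than $\ell$. The Whitney $k$-forms on a single simplex form the trimmed space $\mathcal P_1^-\Lambda^k$, not the full space $\mathcal P_1\Lambda^k$ of $k$-forms with linear coefficients. For instance, with $n=3$, $k=1$ one has $\ell=2$, so your claim requires reproduction of all linear $1$-forms; but the Whitney $1$-forms on a tetrahedron have dimension $6$ (one per edge) while $\mathcal P_1\Lambda^1$ has dimension $12$, so $\pi_{\hat T}$ cannot be the identity there. Fortunately this does not sink the argument: the Bramble--Hilbert conclusion you actually state retains \emph{all} the seminorms $|\hat u|_{H^m}$ for $1\le m\le \ell$ (rather than only the top one), and for that bound it suffices that $\pi_{\hat T}$ reproduce constant $k$-forms. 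This is precisely the paper's repair: subtract the average $\bar u=n!\int_{\hat T}u$, use $\pi_{\hat T}\bar u=\bar u$, and apply Poincar\'e's inequality $\|u-\bar u\|_{L^2\Lambda^k(\hat T)}\le C|u|_{H^1\Lambda^k(\hat T)}$ to absorb the $L^2$ part of $\|u-\bar u\|_{H^\ell\Lambda^k(\hat T)}$. With your polynomial-reproduction claim weakened to constants and that Poincar\'e step inserted, your proof is correct and coincides with the paper's.
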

\begin{proof}
First we note that
the canonical projection is defined simplex by simplex, as
$$
(\pi_h u)|_T = \pi_T(u|_T),
$$
where, for $v$ a $k$-form on $T$, $\pi_T v$ is its interpolant into the space of Whitney forms
on the single simplex $T$.  Therefore, it is enough to prove that
\begin{equation}\label{etp}
\|u-\pi_T u\|_{L^2\Lambda^k(T)} \le C \, \sum_{m=1}^{\ell} h^m|u|_{H^m\Lambda^k(T)}, \quad u\in H^\ell\Lambda^k(T),
\end{equation}
with the constant $C$ depending on $T$ only through its shape constant.
We prove this first for the unit right simplex in $\R^n$, $\hat T$, with vertices at the origin and
the $n$ points $(1,0,\ldots,0)$, $(0,1,0,\ldots)$, \dots.  Since $\ell>(n-k)/2$, we obtain,
by the Sobolev embedding theorem, that
$\|\pi_{\hat T} u\|_{L^2\Lambda^k(\hat T)}\le C\|u\|_{H^\ell\Lambda^k(\hat T)}$, and so, by the triangle inequality,
$$
\|u-\pi_{\hat T} u\|_{L^2\Lambda^k(\hat T)}\le C\|u\|_{H^\ell\Lambda^k(\hat T)}.
$$
Now let $\bar u = n!\int_{\hat T}u$, a constant $k$-form on $\hat T$ equal to the average
of $u$.  Then $\pi_{\hat T}\bar u = \bar u$, so
\begin{multline*}
\|u-\pi_{\hat T} u\|_{L^2\Lambda^k(\hat T)}
= \|(u-\bar u)-\pi_{\hat T} (u-\bar u)\|_{L^2\Lambda^k(\hat T)}
\\
\le C \|u-\bar u\|_{H^\ell\Lambda^k(\hat T)} \le C(\|u-\bar u\|_{L^2\Lambda^k(\hat T)} 
+\sum_{m=1}^\ell |u|_{H^m\Lambda^k(\hat T)}),
\end{multline*}
where we have used the fact that $\bar u$ is a constant form, so its $m$th Sobolev
seminorm vanishes for $m\ge 1$.
Now we invoke Poincar\'e's inequality
$$
\|u-\bar u\|_{L^2\Lambda^k(\hat T)}\le C|u|_{H^1\Lambda^k(\hat T)}.
$$
Putting things together, and writing $\hat u$ instead of $u$, we have shown that
\begin{equation}\label{hat}
\|\hat u-\pi_{\hat T} \hat u\|_{L^2\Lambda^k(\hat T)}\le C\sum_{m=1}^\ell |\hat u|_{H^m\Lambda^k(\hat T)}, \quad
\hat u \in H^\ell\Lambda^k(\hat T).
\end{equation}
This is the desired result \eqref{etp} in the case $T=\hat T$.

To obtain the result for a general simplex, we scale via an affine diffeomorphism $F:\hat T\to T$.
If $u$ is the $k$-form on $T$ given by \eqref{kform}, then
\begin{equation}\label{pullback}
F^* u = \sum_{\{1\le i_1<\cdots <i_k\le n\}}\ \sum_{j_1,\ldots,j_k=1}^n 
  (u_{i_1\cdots i_k}\circ F) \pd{F^{i_1}}{\hat x^{j_1}}\cdots\pd{F^{i_k}}{\hat x^{j_k}}\,
  d\hat x^{j_1}\wedge\cdots\wedge d\hat x^{j_k}.
\end{equation}
Each of the partial derivatives $\partial F^{i_p}/\partial \hat x^{j_q}$ is a constant
bounded by $h$.  Using the chain rule and change of variables in the integration, we
find that
\begin{equation}\label{scaling}
c|F^*u|_{H^m\Lambda^k(\hat T)} \le (\operatorname{vol}{T})^{-1/2}
\, h^{m+k} |u|_{H^m\Lambda^k(T)}
\le C|F^*u|_{H^m\Lambda^k(\hat T)},
\end{equation}
where the constants $c$ and $C$ depend only on $m$ and $n$ and the shape regularity constant of $T$.
Combining \eqref{hat} and \eqref{scaling} we get
\begin{multline*}
\|u-\pi_T u\|_{L^2\Lambda^k(T)}
\le C(\operatorname{vol}{T})^{1/2} h^{-k}\|\hat u - \pi_{\hat T}\hat u\|_{L^2\Lambda^k(\hat T)}
\\
\le C (\operatorname{vol}{T})^{1/2}h^{-k}\sum_{m=1}^\ell |\hat u|_{H^m\Lambda^k(\hat T)}
\le C\sum_{m=1}^\ell h^m|u|_{H^m\Lambda^k(T)},
\end{multline*}
which establishes \eqref{etp}.
\end{proof}

Our approach to bounding the norm of the consistency error is to relate it to another quantity
which has been studied in the finite element literature, namely
\begin{equation}\label{defA}
A_h(u):= \sup_{v_h\in \Lambda_h^{k-1}}\frac{\langle u-\pi_hu, \exd v_h \rangle}{\|v_h\|}.
\end{equation}
\begin{thm}\label{t:equiv}
Assume the approximation property \eqref{approx}.
Then, for any smooth $u\in L^2\Lambda^k$ belonging to the domain of $d^*$ we have
$$
\lim_h \|d^*u-d^*_h\pi_hu\| = 0 \iff \lim_h A_h(u) =0.
$$
\end{thm}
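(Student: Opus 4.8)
The plan is to express both the consistency error $\|d^*u-d^*_h\pi_hu\|$ and the quantity $A_h(u)$ in terms of a single intermediate object, the $L^2$-orthogonal projection of $d^*u$ onto the Whitney space, and then to read off the equivalence from the approximation property \eqref{approx}. Throughout, let $P_h$ denote the orthogonal projection of $L^2\Lambda^{k-1}$ onto $\Lambda_h^{k-1}$.

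The first and central step is to establish the exact identity
$$
A_h(u) = \|d^*_h\pi_h u - P_h d^*u\|.
$$
To see this, fix $v_h\in\Lambda_h^{k-1}$. Since the Whitney forms form a subcomplex we have $\exd v_h\in\Lambda_h^k$, so from the defining relation \eqref{adj} of $d^*_h$,
$$
\langle d^*_h\pi_h u, v_h\rangle = \langle\pi_h u,\exd v_h\rangle = \langle u,\exd v_h\rangle - \langle u-\pi_h u,\exd v_h\rangle .
$$
Because $\Lambda_h^{k-1}\subset H\Lambda^{k-1}$ and $u$ lies in the domain of $d^*$, the defining relation of $d^*$ gives $\langle u,\exd v_h\rangle = \langle d^*u, v_h\rangle = \langle P_h d^*u, v_h\rangle$, the last equality holding because $v_h\in\Lambda_h^{k-1}$. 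Combining these yields
$$
\langle d^*_h\pi_h u - P_h d^*u, \, v_h\rangle = -\langle u-\pi_h u,\exd v_h\rangle, \quad v_h\in\Lambda_h^{k-1} .
$$
Since $d^*_h\pi_h u - P_h d^*u$ itself belongs to $\Lambda_h^{k-1}$, its norm is computed by the supremum of the normalized pairing over $v_h\in\Lambda_h^{k-1}$; as $v_h$ and $-v_h$ both range over the space, that supremum equals the right-hand side of the definition \eqref{defA}, giving the identity.

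With the identity in hand, the equivalence follows from two triangle inequalities. On one side,
$$
\|d^*u - d^*_h\pi_h u\| \le \|d^*u - P_h d^*u\| + \|P_h d^*u - d^*_h\pi_h u\| = \|d^*u - P_h d^*u\| + A_h(u),
$$
and on the other,
$$
A_h(u) = \|P_h d^*u - d^*_h\pi_h u\| \le \|P_h d^*u - d^*u\| + \|d^*u - d^*_h\pi_h u\| .
$$
Since $d^*u\in L^2\Lambda^{k-1}$, the approximation property \eqref{approx} gives $\|d^*u - P_h d^*u\| = \dist(d^*u,\Lambda_h^{k-1})\to0$ as $h\to0$. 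Hence the first bound shows $A_h(u)\to0$ implies $\|d^*u - d^*_h\pi_h u\|\to0$, and the second shows the converse, which is the asserted equivalence.

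The argument is essentially algebraic, so I expect no serious analytic obstacle; the one point requiring care is the \emph{exact} identity $A_h(u)=\|d^*_h\pi_h u - P_h d^*u\|$ rather than a mere inequality. Equality is what makes the implication $\|d^*u-d^*_h\pi_hu\|\to0 \Rightarrow A_h(u)\to0$ work, and it rests on recognizing $d^*_h\pi_h u - P_h d^*u$ as precisely the Riesz representative in $\Lambda_h^{k-1}$ of the functional $v_h\mapsto-\langle u-\pi_h u,\exd v_h\rangle$. The only structural facts used are that $\exd\Lambda_h^{k-1}\subset\Lambda_h^k$ and $\Lambda_h^{k-1}\subset H\Lambda^{k-1}$, so that the defining relations of both $d^*$ and $d^*_h$ apply to the same test forms $v_h$.
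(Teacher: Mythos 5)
Your proof is correct and is essentially the paper's argument: the paper proves the statement via Lemma~\ref{t:eq}, which establishes the sandwich $A_h(u)\le\|\exd^*u-\exd^*_h\pi_hu\|\le\dist(\exd^*u,\Lambda_h^{k-1})+A_h(u)$ using exactly your adjoint algebra and the same element $w=P_h\exd^*u-\exd^*_h\pi_hu$, and then concludes from \eqref{approx} just as you do. Your only deviation is to upgrade the paper's one-sided bound $\|w\|\le A_h(u)$ to the exact identity $\|w\|=A_h(u)$ and recover both implications by triangle inequalities, at the harmless cost that your reverse direction carries an extra $\dist(\exd^*u,\Lambda_h^{k-1})$ term where the paper obtains the cleaner $A_h(u)\le\|\exd^*u-\exd^*_h\pi_hu\|$ directly from Cauchy--Schwarz.
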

This follows immediately from Lemma~\ref{t:eq}.
\begin{lem}\label{t:eq}
Let $1\leq k\leq n$, and let $u\in L^2\Lambda^k$ be smooth and in the domain of $d^*$.
Then
\begin{equation}\label{e:eq}
A_h(u)
\leq
\|\exd^*u-\exd^*_h\pi_hu\| 
\leq 
\dist(\exd^*u,\Lambda_h^{k-1})+A_h(u).
\end{equation}
\end{lem}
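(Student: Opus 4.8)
The plan is to route everything through the consistency error
$$
e := \exd^*u - \exd^*_h\pi_hu \in L^2\Lambda^{k-1},
$$
and to recognize $A_h(u)$ as the norm of a specific piece of $e$. First I would test $e$ against an arbitrary Whitney form $v_h\in\Lambda_h^{k-1}$. Since $u$ lies in the domain of $\exd^*$ and $\Lambda_h^{k-1}\subset H\Lambda^{k-1}$, the defining property of $\exd^*$ gives $\<\exd^*u,v_h\>=\<u,\exd v_h\>$; on the other hand, the definition \eqref{adj} of $\exd^*_h$ gives $\<\exd^*_h\pi_hu,v_h\>=\<\pi_hu,\exd v_h\>$. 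Subtracting yields the key identity
\begin{equation*}
\<e,v_h\> = \<u-\pi_hu,\exd v_h\>, \qquad v_h\in\Lambda_h^{k-1}.
\end{equation*}

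Next I would let $P_h$ denote the $L^2$-orthogonal projection onto $\Lambda_h^{k-1}$. Comparing the identity above with the definition \eqref{defA} of $A_h(u)$, and using that the supremum of $\<w,v_h\>/\|v_h\|$ over $v_h\in\Lambda_h^{k-1}$ equals $\|P_hw\|$ (the inner product only sees $P_hw$, and the ratio is maximized at $v_h=P_hw$), I obtain $A_h(u)=\|P_he\|$. Because $\exd^*_h\pi_hu$ already belongs to $\Lambda_h^{k-1}$, it is fixed by $P_h$, so this can equally be written $A_h(u)=\|P_h\exd^*u-\exd^*_h\pi_hu\|$.

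Both inequalities then follow from the orthogonal splitting $e=(I-P_h)e+P_he$. For the lower bound, $A_h(u)=\|P_he\|\le\|e\|$ since an orthogonal projection is a contraction. For the upper bound, I note that the discrete term of $e$ is annihilated by $I-P_h$, so $(I-P_h)e=(I-P_h)\exd^*u$, whence $\|(I-P_h)e\|=\dist(\exd^*u,\Lambda_h^{k-1})$; the triangle inequality then gives
$$
\|e\| \le \|(I-P_h)e\| + \|P_he\| = \dist(\exd^*u,\Lambda_h^{k-1}) + A_h(u).
$$

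The computation is short, and the only delicate point is the first step: replacing $\<\exd^*u,v_h\>$ by $\<u,\exd v_h\>$ requires $v_h$ to be an admissible test form, i.e.\ $v_h\in H\Lambda^{k-1}$, which holds precisely because the Whitney forms sit inside the $L^2$ de~Rham complex. Once the identity $A_h(u)=\|P_he\|$ is in hand, the remainder is just the Pythagorean bookkeeping that splits $e$ into its $\Lambda_h^{k-1}$-component, measured by $A_h(u)$, and its orthogonal complement, measured by the best-approximation distance of $\exd^*u$ to $\Lambda_h^{k-1}$.
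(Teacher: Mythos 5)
Your proof is correct and follows essentially the same route as the paper's: the same key identity $\<\exd^*u-\exd^*_h\pi_hu,v_h\>=\<u-\pi_hu,\exd v_h\>$, the same $L^2$-projection $P_h$, and the same splitting of the error into $\exd^*u-P_h\exd^*u$ and $w=P_h\exd^*u-\exd^*_h\pi_hu$. The only (cosmetic) difference is that you package the paper's two separate estimates into the single exact identity $A_h(u)=\|P_h(\exd^*u-\exd^*_h\pi_hu)\|$, whereas the paper proves $\|w\|\le A_h(u)$ by the duality step $\|w\|^2=\<u-\pi_hu,\exd w\>$; these are the same computation.
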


\begin{proof}
The first inequality is straightforward.  For any $v_h\in \Lambda^{k-1}_h$, 
\begin{equation*}
\frac{\langle u-\pi_hu, \exd v_h \rangle}{\|v_h\|}
=\frac{\langle \exd^*u-\exd^*_h\pi_hu,v_h\rangle}{\|v_h\|}
\le\|\exd^*u-\exd^*_h\pi_hu\|.
\end{equation*}
For the second inequality, we introduce the $L^2$-orthogonal projection
$P_h:L^2\Lambda^{k-1}\to \Lambda_h^{k-1}$
and invoke the triangle inequality to get
\begin{equation}\label{e:triangle}
\|\exd^*u-\exd^*_h\pi_hu\| \leq \|\exd^*u-P_h\exd^*u\|+\|P_h\exd^*u-\exd^*_h\pi_hu\|
=\dist(d^*u,\Lambda^{k-1}_h)+\|w\|,
\end{equation}
where $w=P_h\exd^*u-\exd^*_h\pi_hu\in\Lambda^k_h$.  Now
\begin{equation}
\|w\|^2 = \langle P_h\exd^*u-\exd^*_h\pi_hu,w\rangle=\langle u-\pi_hu,\exd w\rangle,
\end{equation}
and hence
\begin{equation}
\|w\| = \frac{\langle u-\pi_hu,\exd w\rangle}{\|w\|}
\leq\sup_{v_h\in \Lambda_h^{k-1}}\frac{\langle u-\pi_hu, \exd v_h \rangle}{\|v_h\|}=A_h(u),
\end{equation}
which completes the proof.
\end{proof}

Thus we wish to bound $\langle u-\pi_h u,d v_h\rangle/\|v_h\|$ for smooth $u$ in the domain
of $d^*$ and $v_h\in\Lambda^k_h$.  An obvious approach is to apply the Cauchy--Schwarz inequality
and then use the approximation estimate \eqref{proj-estimate} to obtain
\begin{equation}\label{cs}
|\langle u-\pi_h u,d v_h\rangle|\le \|u-\pi_h u\|\|d v_h\| \le Ch\|u\|_{H^\ell \Lambda^k}\|d v_h\|.
\end{equation}
To continue, we need to bound $\|d v_h\|/\|v_h\|$ for $v_h$ an arbitrary non-zero element of $\Lambda^k_h$.
Because $\Lambda^k_h$ consists of piecewise polynomials, it is possible to bound its derivative in
terms of its value using a Bernstein type inequality or inverse estimate.  This gives that
\begin{equation}\label{e:inv}
\|d v_h\|\le C\underline h^{-1} \|v_h\|, \quad v_h\in\Lambda^k_h,
\end{equation}
where $\underline h = \min_{T\in\Delta_n(\T_h)} \diam T$.  Unfortunately, even if
we assume that our triangulations
are \emph{quasiuniform}, i.e., that $\underline h \ge c h$ for some fixed $c>0$, this just leads to
the bound
$$
A_h(u) \le C\|u\|_{H^\ell\Lambda^k},
$$
which does not tend to zero with $h$.  In fact, we cannot hope to get a bound
which tends to zero without further hypotheses, since, as we have seen,
even for the nice mesh sequence and form $u$ considered in the previous section, $d^*_h$ is not consistent,
and so $A_h(u)$ does not tend to zero.

Nonetheless, for very special mesh sequences it is possible to improve the bound
\eqref{cs} from first to second order in $h$.  This was established by
Brandts and K\v r\'\i\v zek in their work on gradient superconvergence \cite{BK03}.
The mesh condition is embodied by the following concept.

\begin{defn}[\cite{BK03}]
A triangulation $\T$ on $M$ is called \emph{uniform}
if there exist $n$ linearly independent vectors $e_1,\ldots,e_n$, such that
\begin{enumerate}
\item
Every simplex in $\T$ contains an edge parallel to each $e_j$.
\item
If an edge $e$ is parallel to one of the $e_j$ and is not contained in $\partial M$,
then the union $P_e$ of simplices containing $e$ is invariant under reflection
through the midpoint $m_e$ of $e$,
i.e., $2m_e-x\in P_e$ for all $x\in P_e$.
\end{enumerate}
\end{defn}

The crisscross triangulations shown in Figure~\ref{f:crisscross} satisfy the first condition
of the definition, but not the second, and so are not uniform.  On the other hand, the mesh
sequence that is obtained by starting from a single triangle, or from a division of a square into
two triangles and applying regular standard subdivision, is uniform.  See the
first two rows of Figure~\ref{f:uniform}.  A
uniform triangulation of the cube in $n$ dimensions is obtained by subdividing it into
$m^n$ subcubes, and dividing each of these into $n!$ simplices sharing a common diagonal,
with all the diagonals of the subcubes chosen to be parallel.  The 3D case is shown
in Figure~\ref{f:uniform}. 
We refer to \cite{BK03} for more details.

\begin{figure}[htb]
\centerline{%
 \begin{tabular}{cccc}
  \includegraphics[width=1.1in]{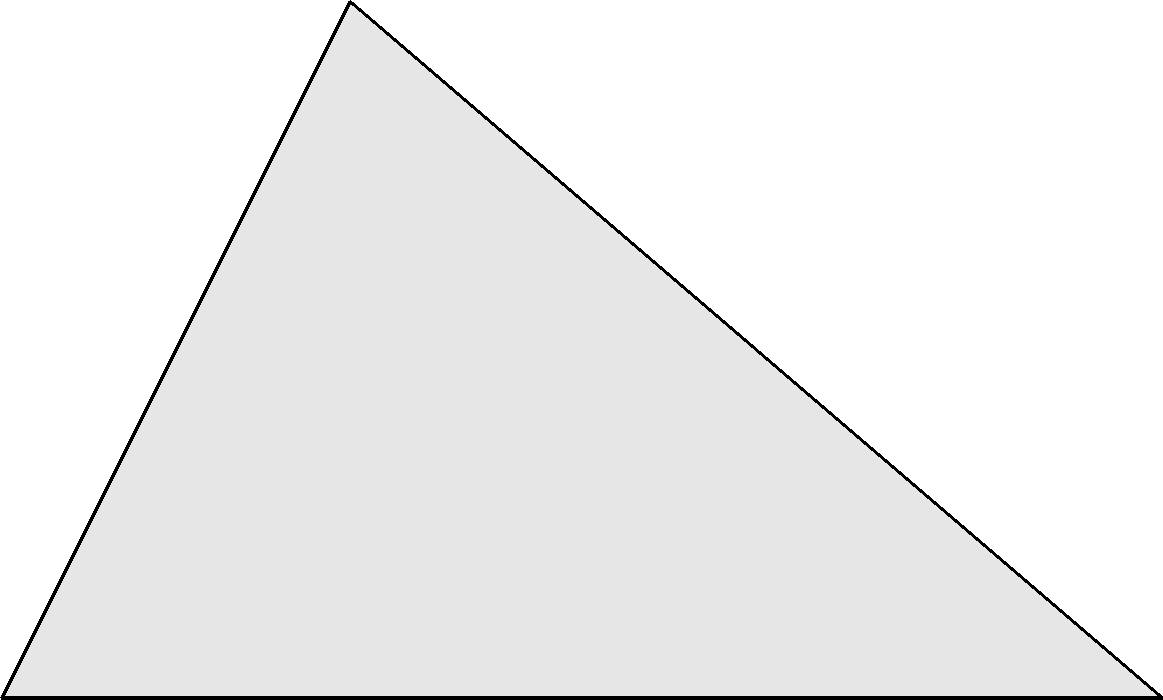} &
  \includegraphics[width=1.1in]{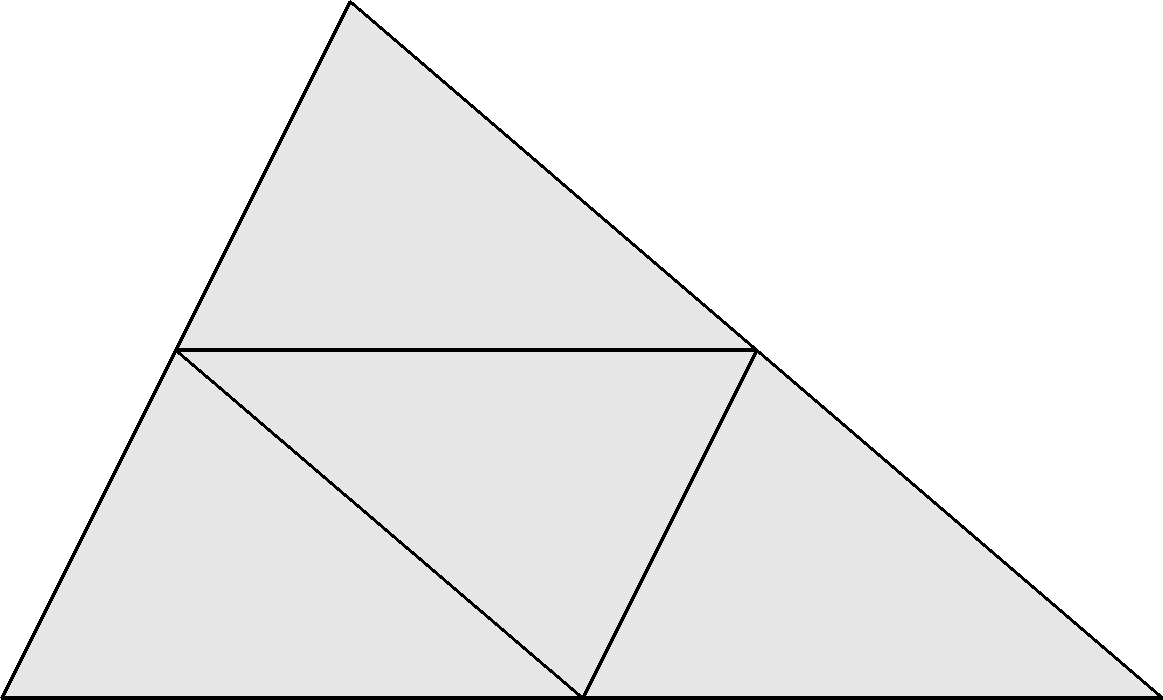} &
  \includegraphics[width=1.1in]{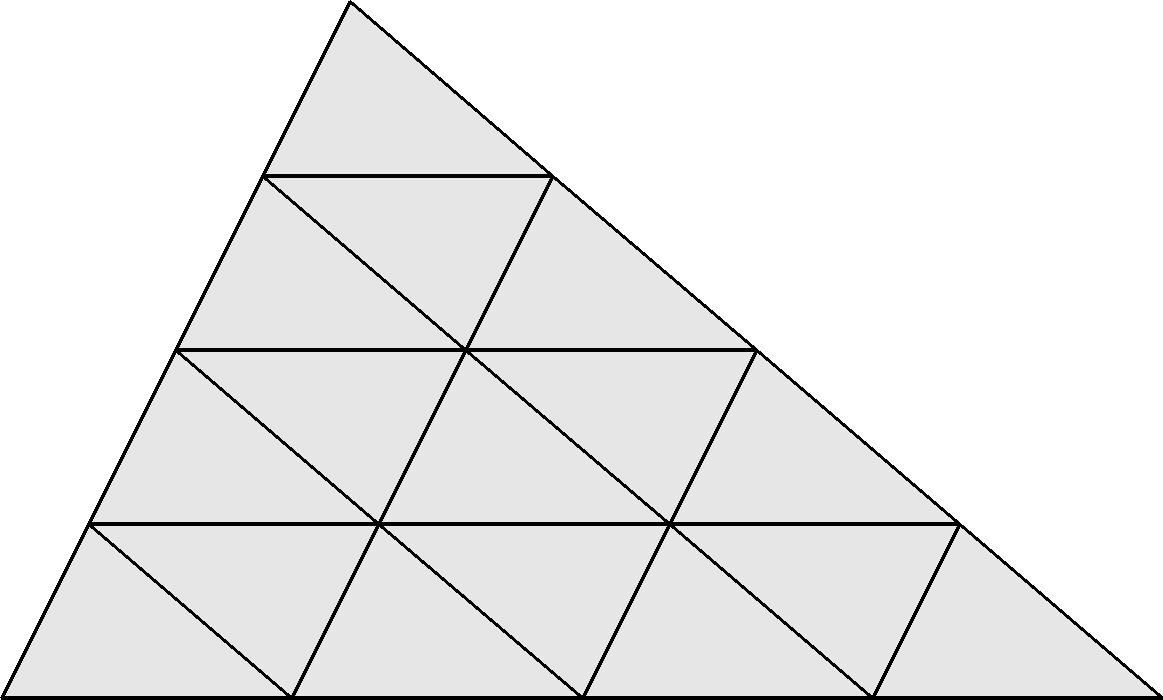} &
  \includegraphics[width=1.1in]{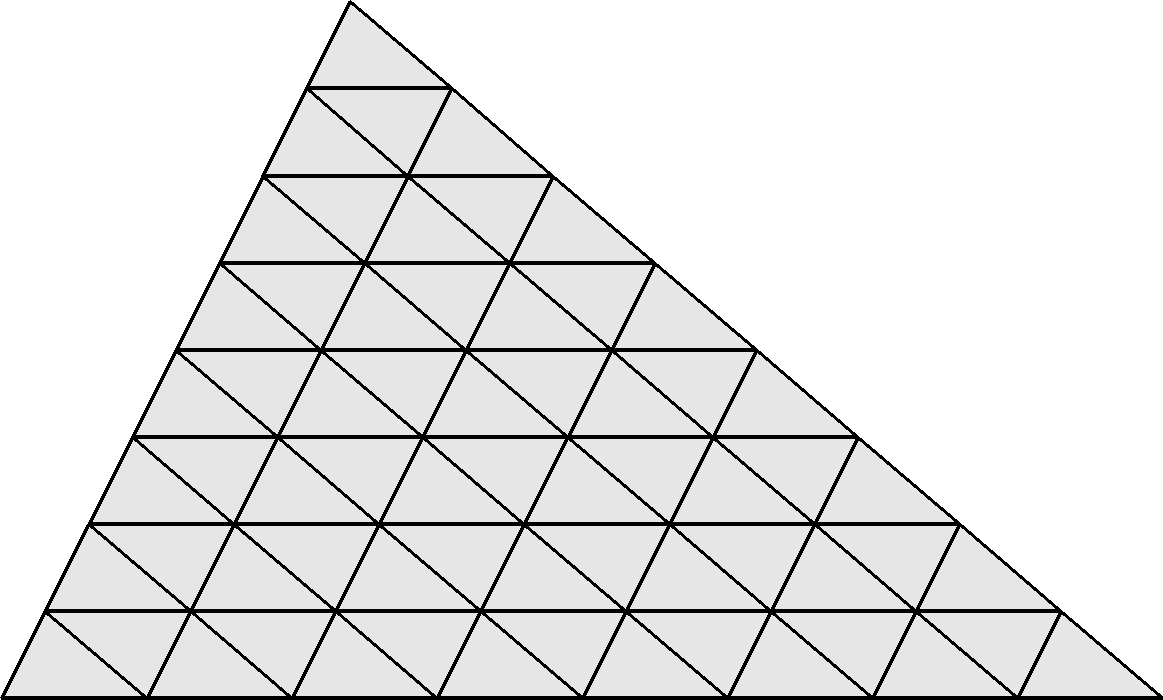} \\[1ex]
  \includegraphics[width=1.1in]{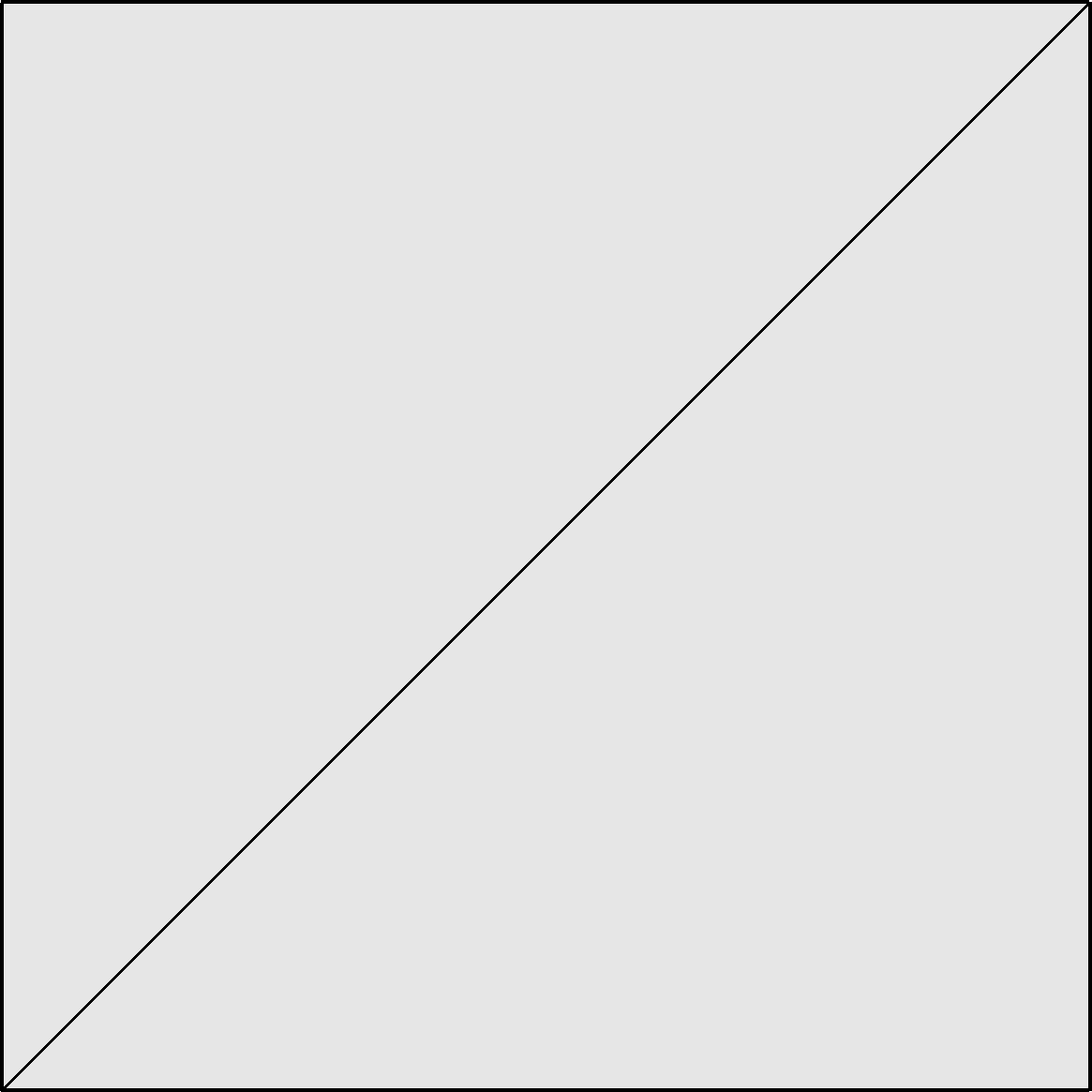} &
  \includegraphics[width=1.1in]{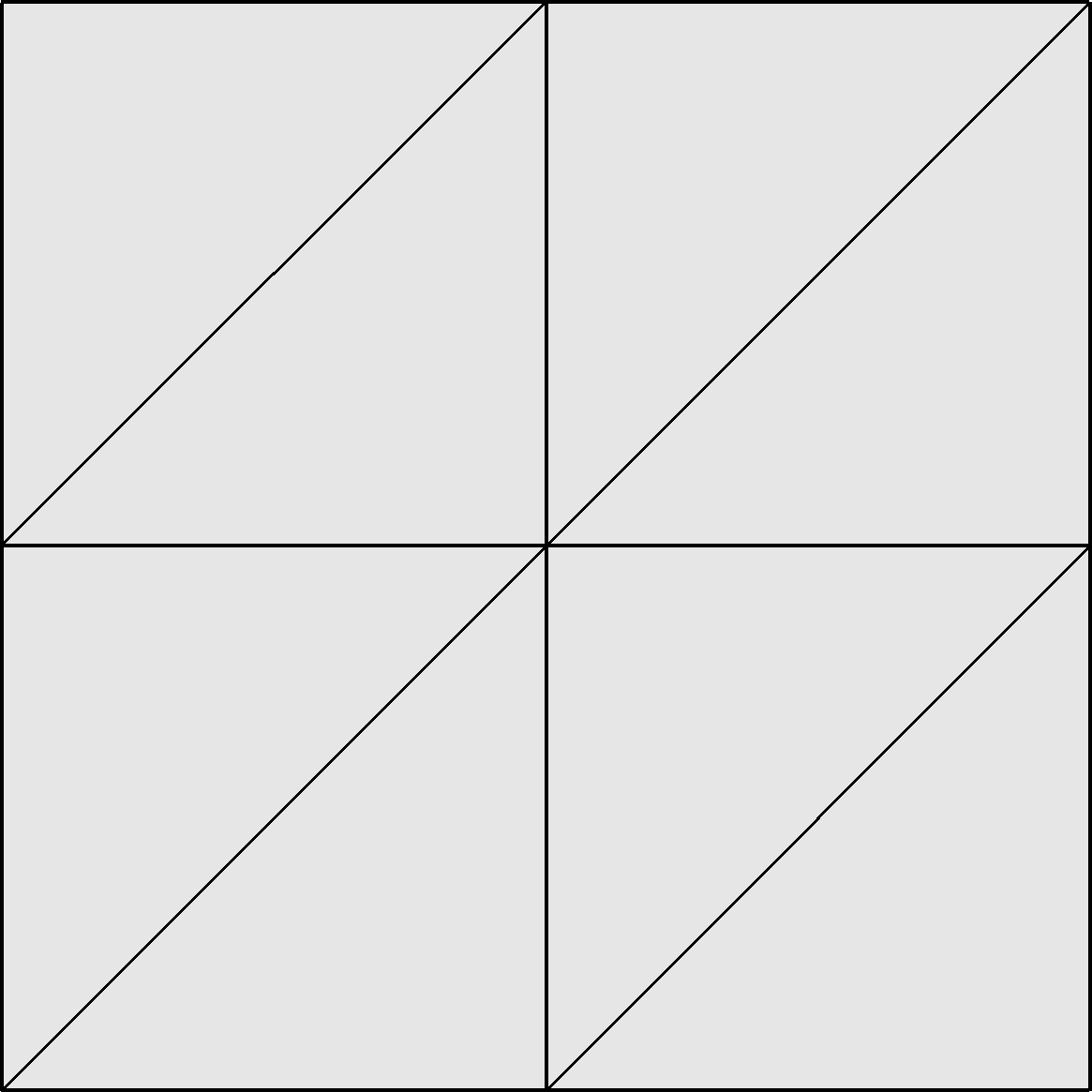} &
  \includegraphics[width=1.1in]{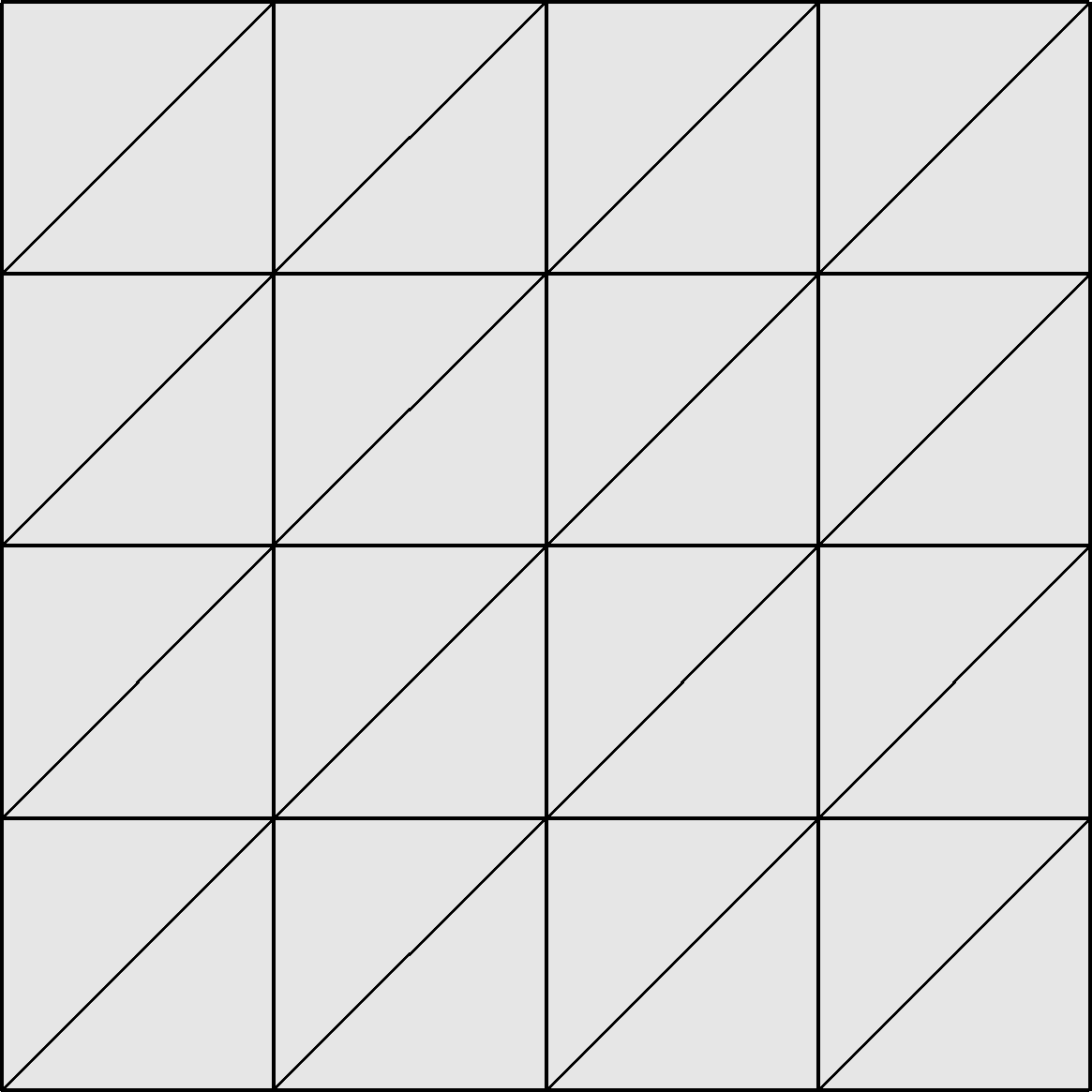} &
  \includegraphics[width=1.1in]{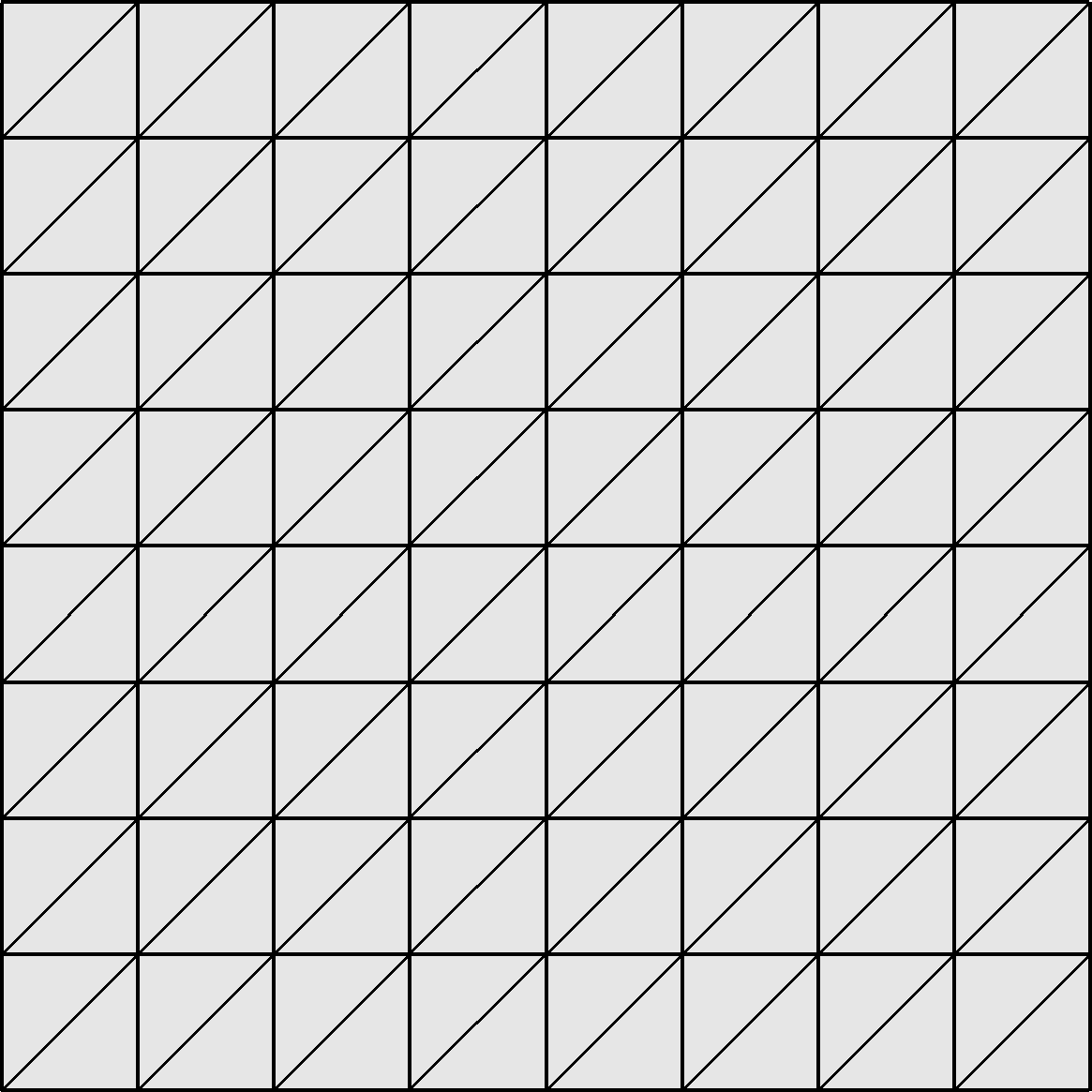} \\[1ex]
  \includegraphics[width=1.1in]{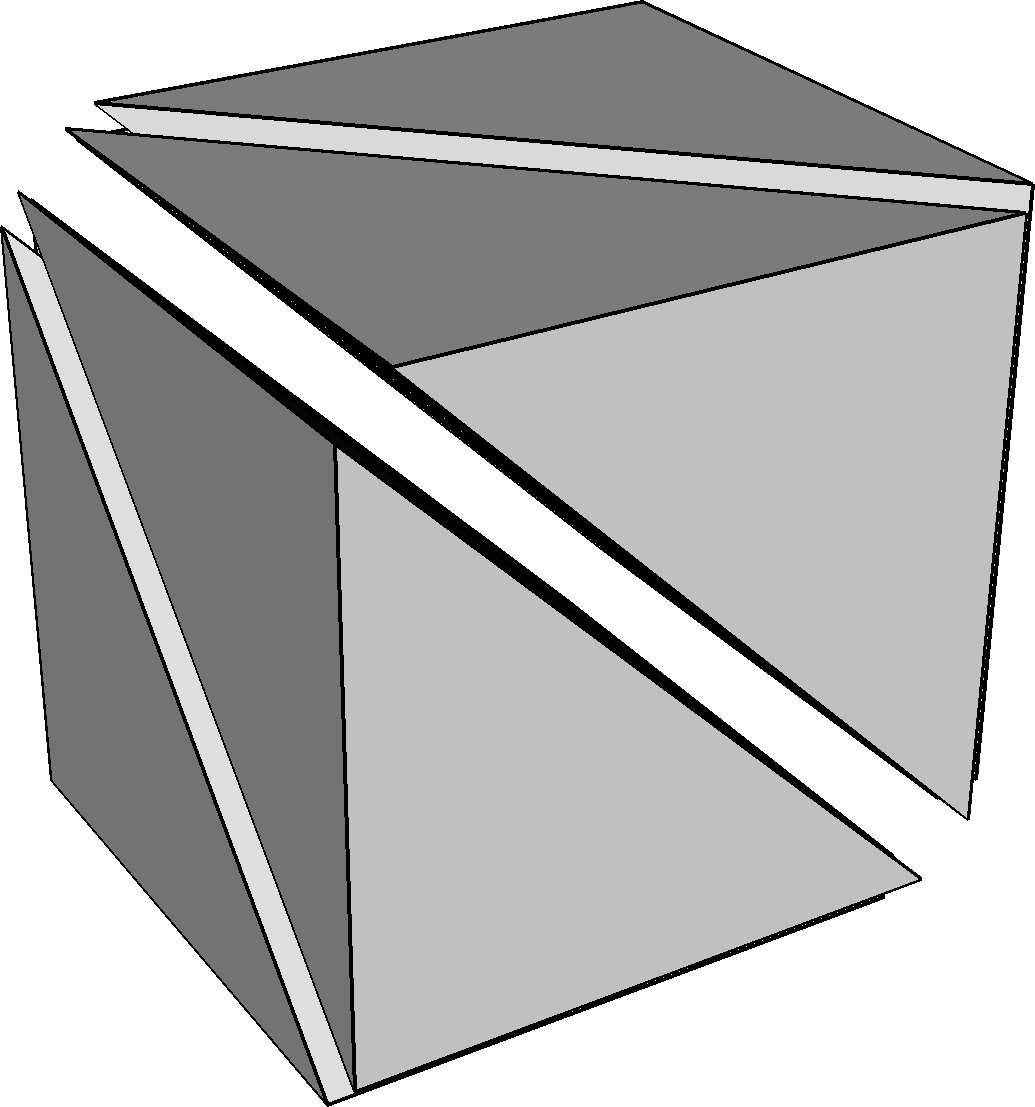} &
  \includegraphics[width=1.1in]{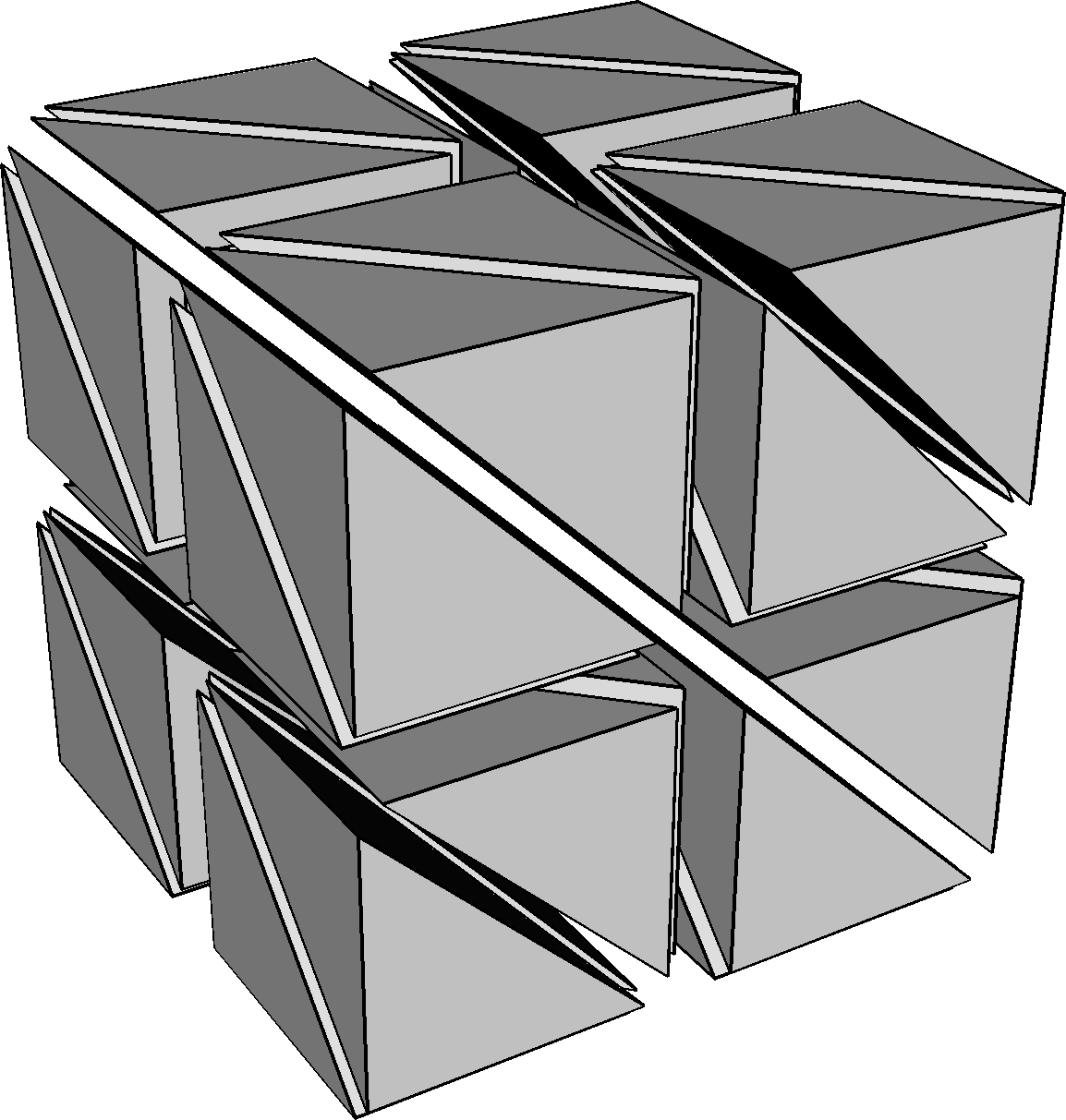} &
  \includegraphics[width=1.1in]{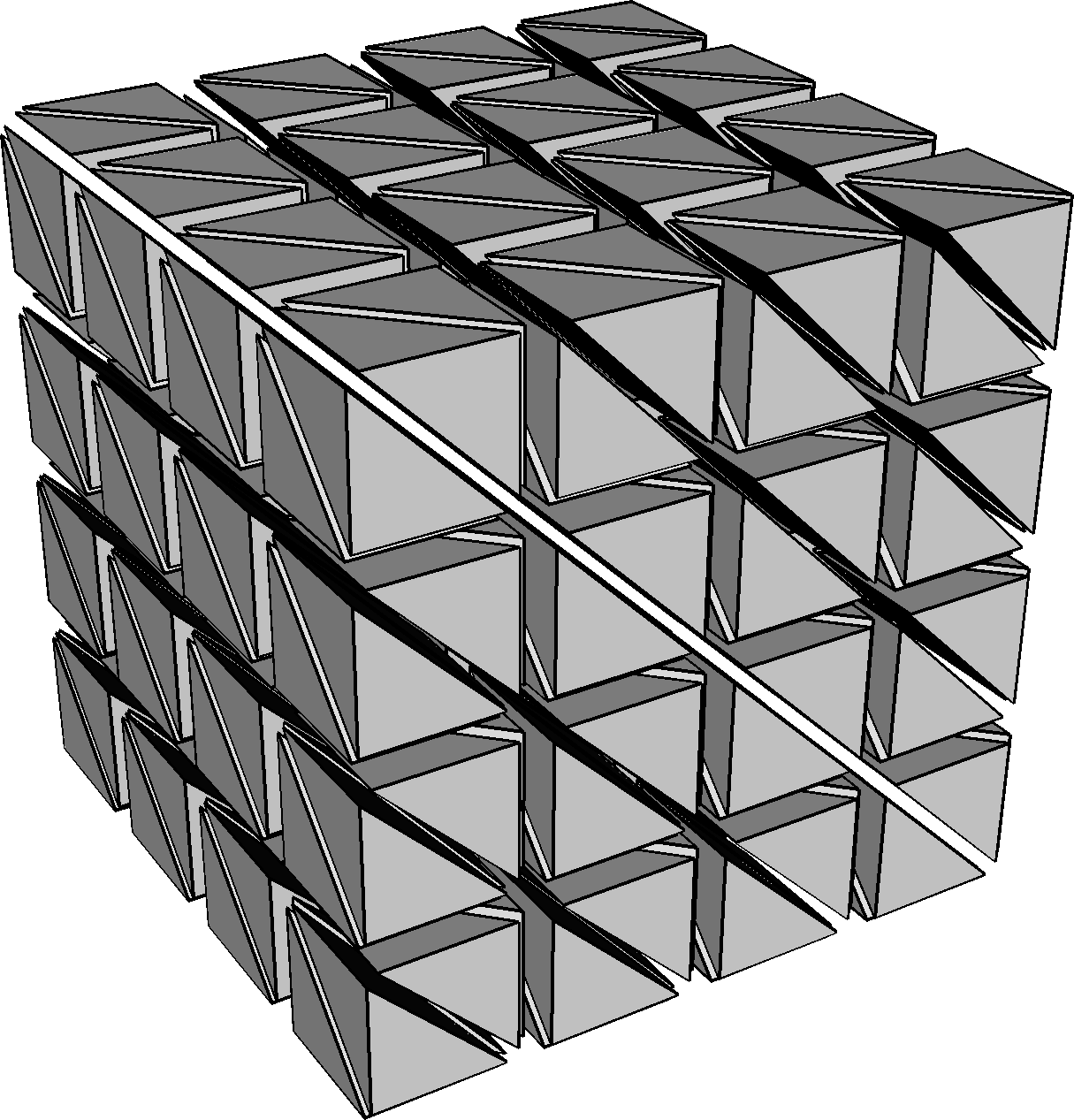} &
  \includegraphics[width=1.1in]{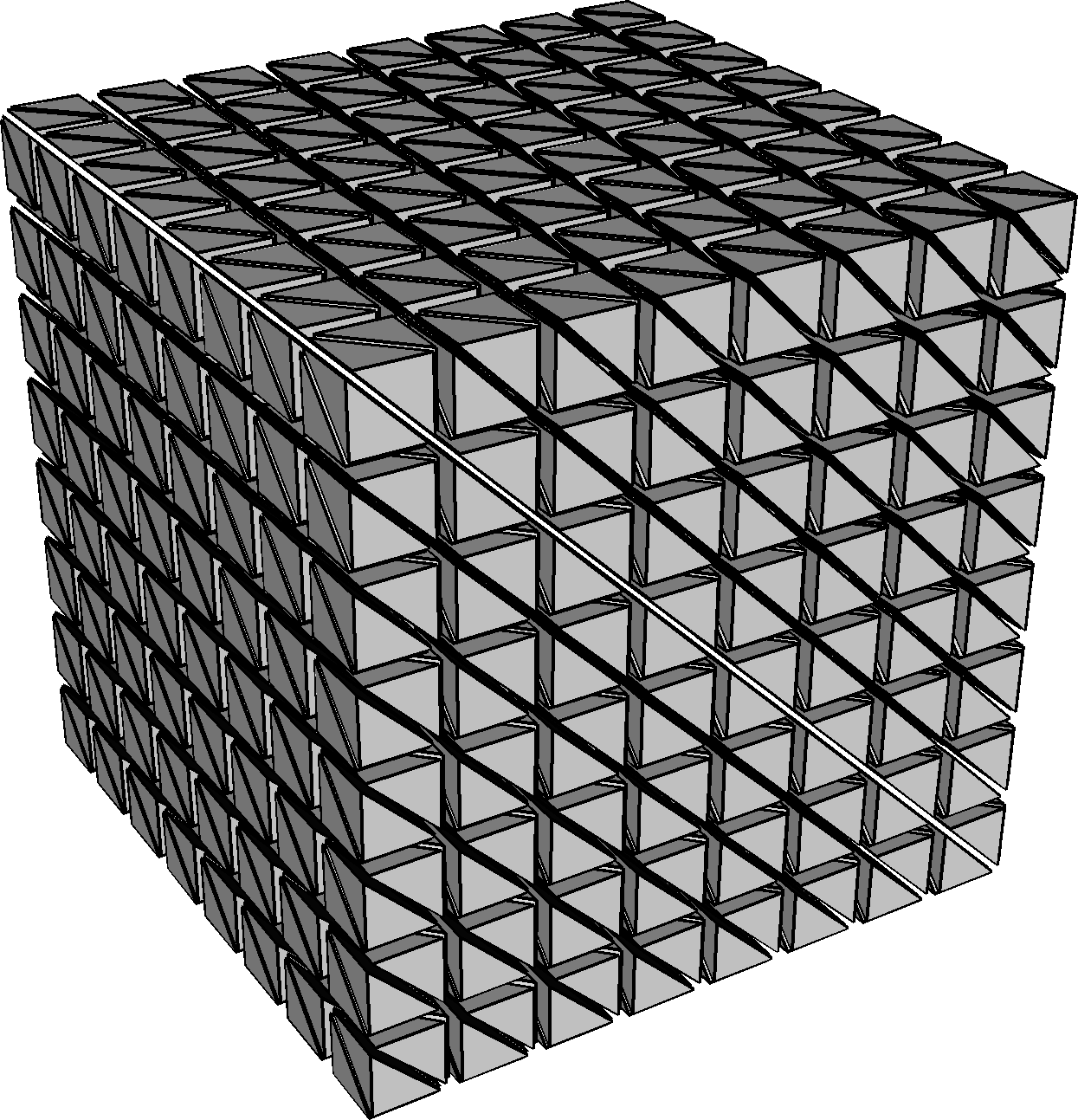}
 \end{tabular}}
\caption{Uniform triangulations.}\label{f:uniform}
\end{figure}

\begin{figure}[htb]
\centerline{%
 \begin{tabular}{cc}
  \includegraphics[width=1.2in]{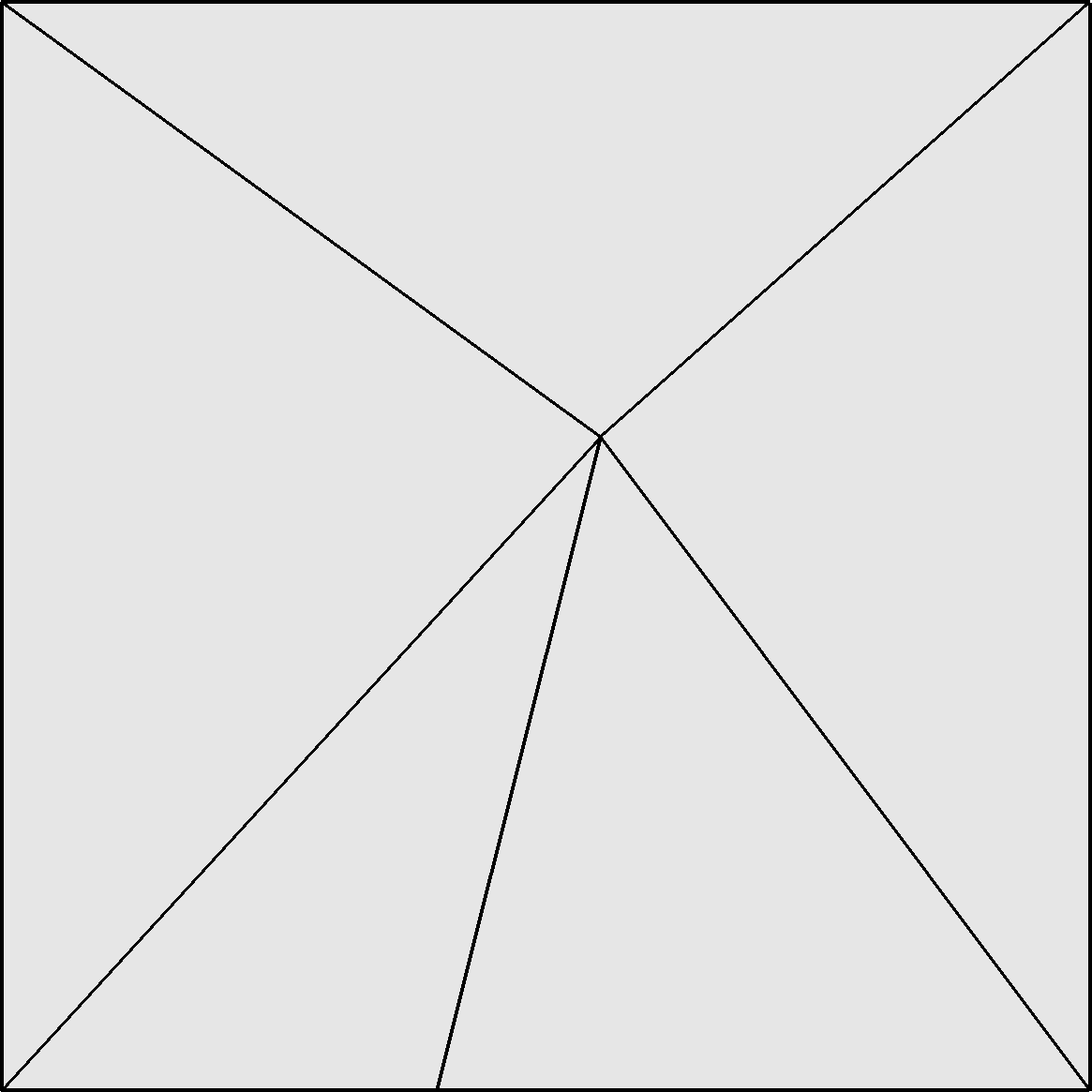} &
  \includegraphics[width=1.2in]{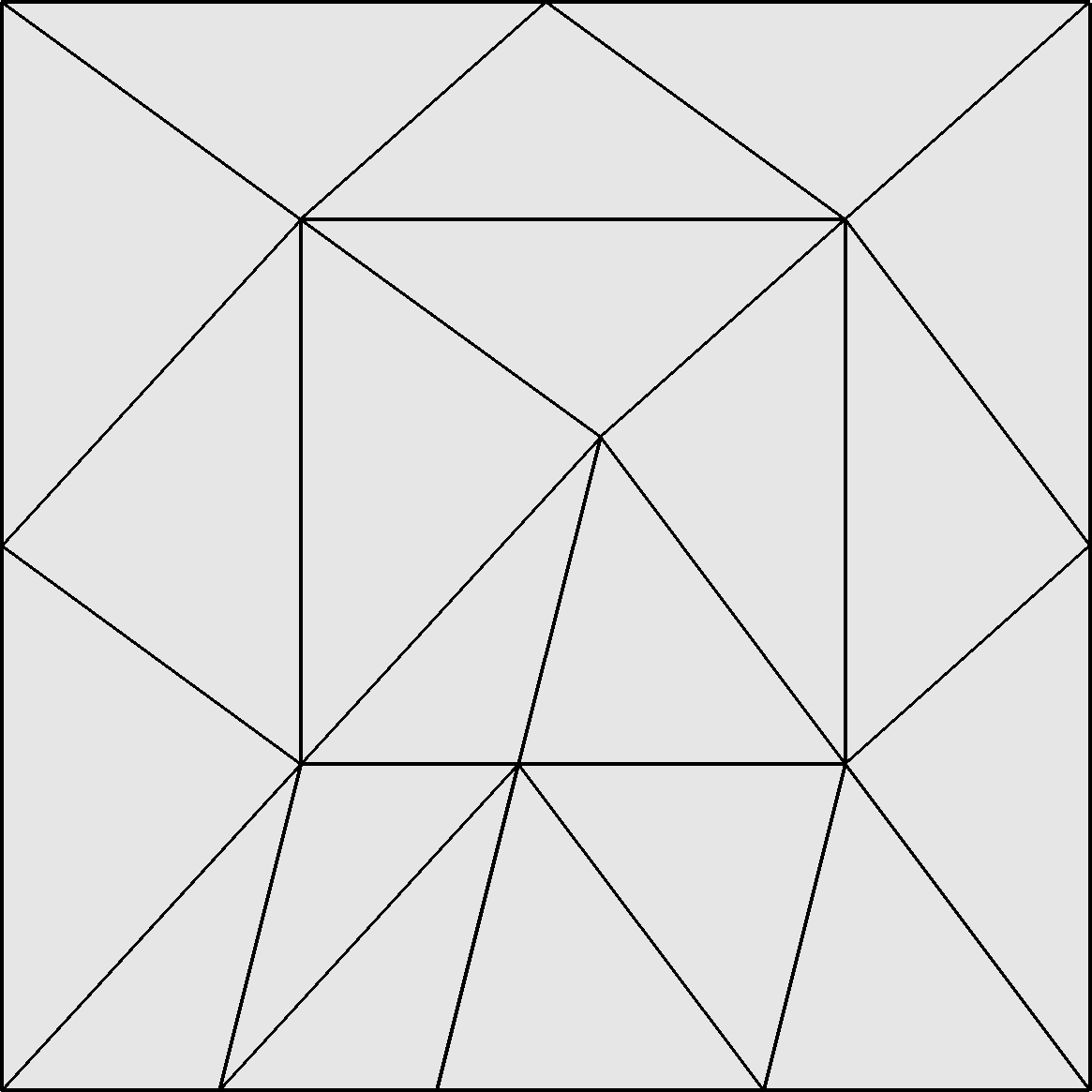} \\
  \includegraphics[width=1.2in]{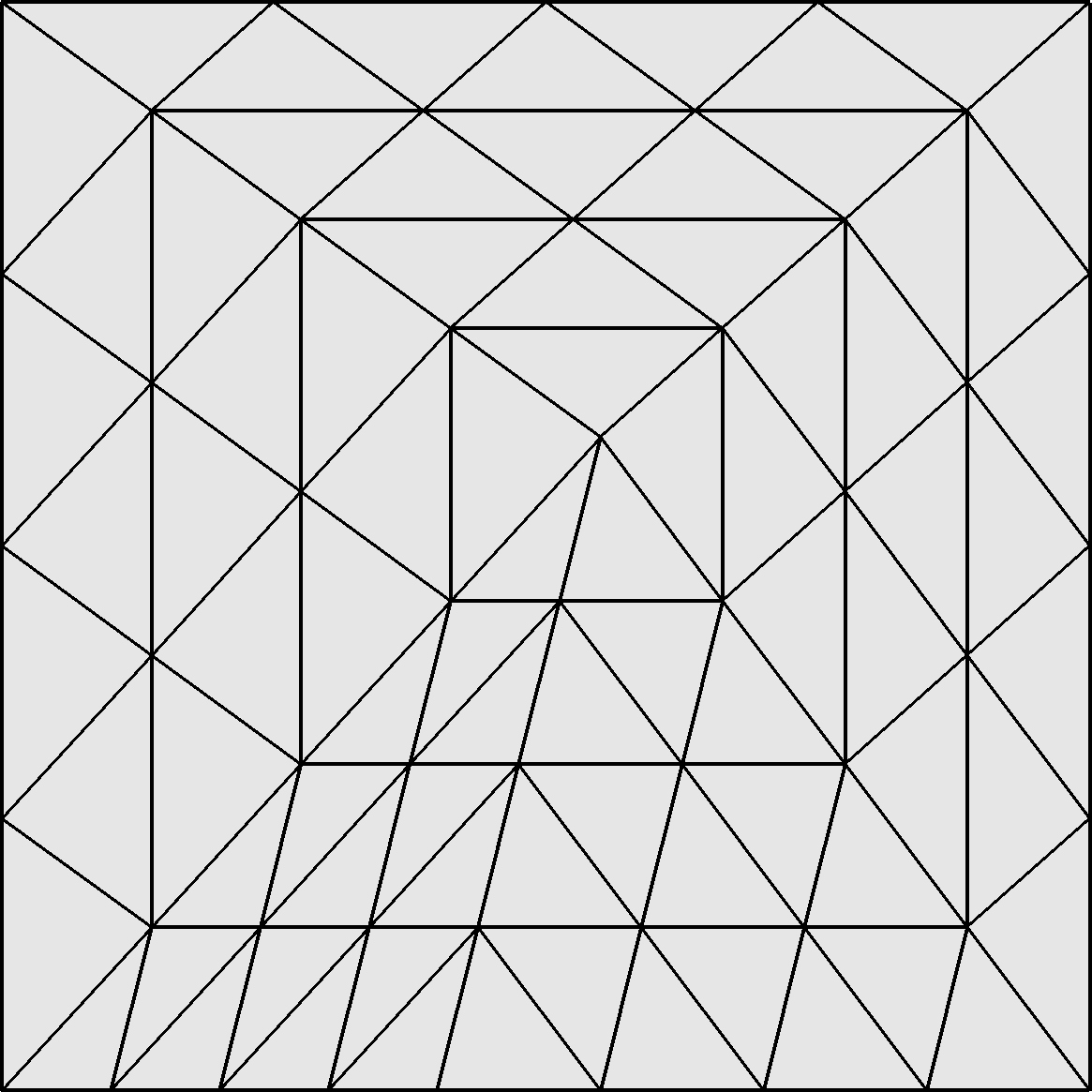} &
  \includegraphics[width=1.2in]{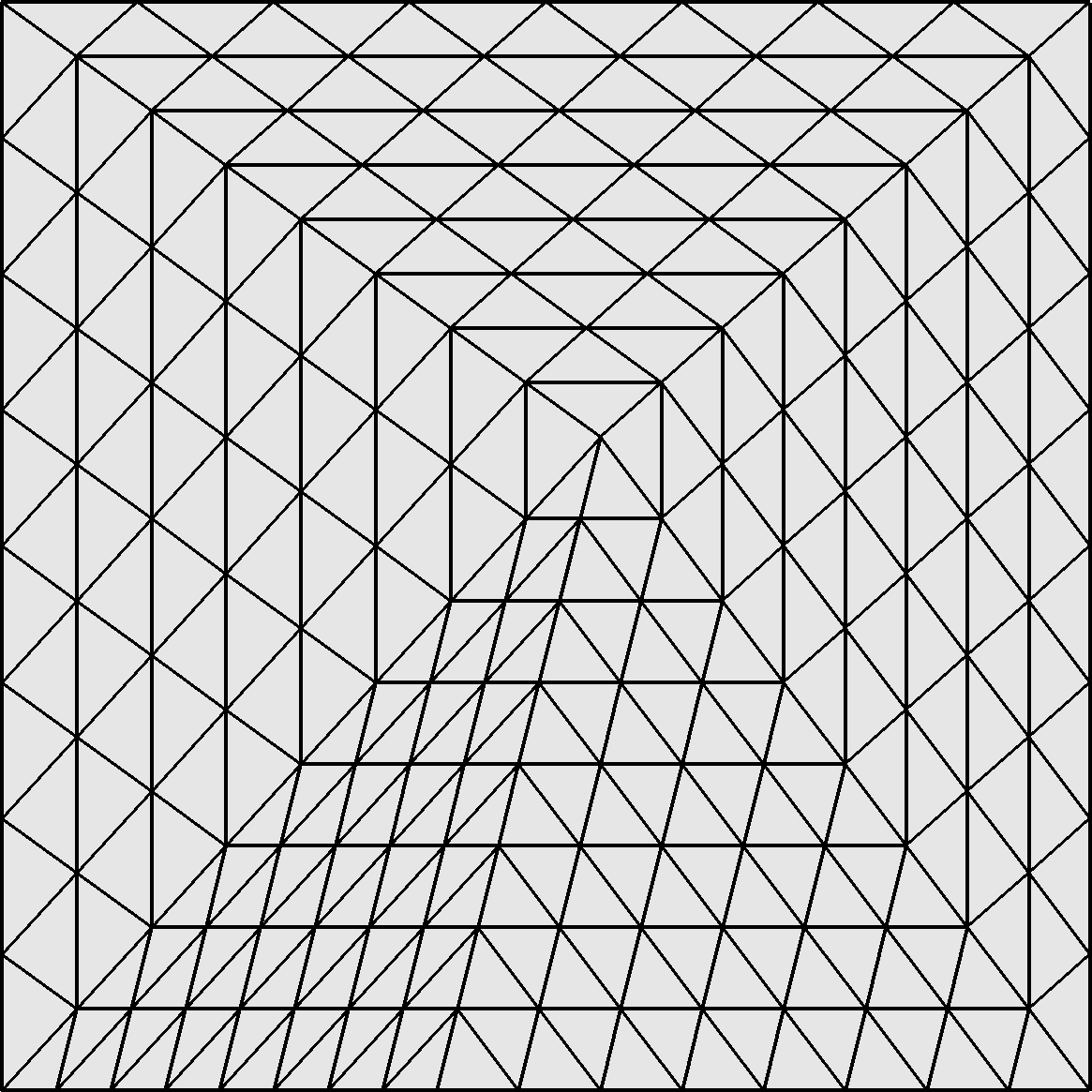}
 \end{tabular}}
\caption{A piecewise uniform sequence of triangulations.}\label{f:puniform}
\end{figure}

Theorem 3.4 of \cite{BK03} claims that if $\{\T_h\}$ is a shape regular family of uniform triangulations of $M$,
and if $u$ is a smooth $1$-form,
then there exists a constant $C>0$ such that
\begin{equation*}
|\langle \pi_hu-u, \exd v_h \rangle| \leq C h^2\|u\|_{H^2\Lambda^1}\|\exd v_h\|,
\end{equation*}
for all $v_h\in \Lambda_h^0\cap \ring H^1(M)$ and $h>0$.
Here $\ring H^1(M)$ denotes the space of $H^1(M)$ functions with vanishing trace on $\partial M$.
However, their proof uses the inequality (cf. (1.5) of \cite{BK03})
\begin{equation*}
\|\pi_hu-u\|_{L^2\Lambda^1} \le C \, h \|u\|_{H^1\Lambda^1},
\end{equation*}
where $C$ is a constant independent of $u$.
This would imply that $\pi_h$ can be continuously extended to $H^1\Lambda^1$, which is impossible for $n\geq3$.
Fortunately, the proof in \cite{BK03} works verbatim if the above inequality is replaced by \eqref{proj-estimate}.
Hence, the following result is essentially proved in \cite{BK03}.

\begin{thm} \label{t:sup-con}
Let $\{\T_h\}$ be a shape regular family of uniform triangulations of $M$,
and let $u$ be a smooth $1$-form. Furthermore, let $\ell$ be the smallest integer so that $\ell>(n-1)/2$.
Then there exists a constant $C>0$ such that 
\begin{equation}\label{e:h2-bd}
|\langle \pi_hu-u, \exd v_h \rangle| \leq C h^2\|u\|_{H^\ell\Lambda^1}\|\exd v_h\|,
\end{equation}
for all $v_h\in \Lambda_h^0\cap \ring H^1(M)$ and $1>h>0$.
\end{thm}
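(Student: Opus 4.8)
The plan is to follow the superconvergence strategy of Brandts and K\v r\'\i\v zek, and to check that the single place where their argument appeals to the (for $n\ge3$ invalid) first-order bound on the projection error can be replaced by \eqref{proj-estimate} without loss in the power of $h$. Identifying $1$-forms with vector fields, the quantity to be controlled is $\int_M(\pi_hu-u)\cdot\grad v_h\,dx$, and the decisive structural fact is that $\grad v_h$ is piecewise constant, since $v_h$ is continuous and piecewise affine.

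First I would record the cheap estimate: by Cauchy--Schwarz and \eqref{proj-estimate}, $|\langle\pi_hu-u,\grad v_h\rangle|\le\|\pi_hu-u\|\,\|\grad v_h\|\le C\bigl(\sum_{m=1}^{\ell}h^m|u|_{H^m\Lambda^1}\bigr)\|\grad v_h\|$, which is only $O(h)$. The entire difficulty is to gain the extra power of $h$, and it is clear that this gain cannot come from the sizes of the factors but only from cancellation; indeed the example of Section~\ref{s:counterexample} shows that no such gain is available without a hypothesis like uniformity.

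The cancellation comes from the reflection symmetry of condition~(2) in the definition of uniformity. I would reorganize the elementwise sum $\sum_T\int_T(\pi_hu-u)\cdot\grad v_h$ by grouping each simplex with its mirror image under the point reflection $x\mapsto2m_e-x$ through the midpoint of an edge $e$ parallel to one of the distinguished directions $e_j$; this pairing exists for interior edges precisely because of condition~(2), while the contributions attached to edges in $\partial M$, where the symmetry is unavailable, are discarded using $v_h|_{\partial M}=0$ (this is the role of the hypothesis $v_h\in\ring H^1(M)$). On a symmetric patch I would expand $u$ in an affine Taylor field $L$ about $m_e$ and split $u=L+(u-L)$. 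For the affine part $L$ the reflection maps the patch to itself and reverses the parity of the Whitney interpolation error and of the piecewise-constant field $\grad v_h$ across the two halves, so the leading contribution cancels --- this is the heart of the matter and the mechanism by which the $O(h)$ term is killed. For the remainder $u-L$ I would again use the cheap bound locally, but now $|u-L|_{H^1}\le Ch|u|_{H^2}$ by the Bramble--Hilbert lemma while $|u-L|_{H^m}=|u|_{H^m}$ for $m\ge2$, so \eqref{proj-estimate} applied to $u-L$ delivers a local $O(h^2)$ estimate with constant controlled by the second- and higher-order seminorms, i.e.\ by $\|u\|_{H^{\max(2,\ell)}\Lambda^1}$ (for $n\ge3$ one has $\ell\ge2$, so this is just $\|u\|_{H^\ell\Lambda^1}$). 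Summation over the patches, a discrete Cauchy--Schwarz, and the finite-overlap property guaranteed by shape regularity then assemble the local estimates into the global bound \eqref{e:h2-bd}.

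The main obstacle is the symmetry-cancellation step: one must set up the parity bookkeeping precisely enough that the affine part of $u$ contributes nothing, which requires knowing exactly how $\pi_h$ and the test field $\grad v_h$ transform under the reflection through $m_e$ and verifying that these transformations carry opposite signs on the two reflected halves of the patch. A secondary point, and the only reason the statement here differs from that of \cite{BK03}, is that their clean bound $\|\pi_hu-u\|\le Ch\|u\|_{H^1\Lambda^1}$ fails for $n\ge3$; one must confirm that substituting \eqref{proj-estimate} for it everywhere --- at the cost of the higher norm $\|u\|_{H^\ell\Lambda^1}$ but nothing in the order of $h$ --- leaves every estimate in the argument intact, which is exactly the assertion made in the remark preceding the theorem.
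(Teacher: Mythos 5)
Your proposal is correct and takes essentially the same route as the paper: the paper proves this theorem precisely by appealing to the superconvergence argument of \cite{BK03} and noting that the single ingredient invalid for $n\ge 3$, namely their first-order bound $\|\pi_h u - u\|\le Ch\|u\|_{H^1\Lambda^1}$, can be replaced verbatim by \eqref{proj-estimate}, which is exactly your plan. Your sketch of the internal mechanism of \cite{BK03} (reflection-symmetry cancellation of the affine Taylor part plus Bramble--Hilbert for the remainder) is detail the paper delegates entirely to the citation, and your observation that the remainder step really costs $\max(2,\ell)$ derivatives (immaterial for $n\ge3$) is, if anything, more careful than the paper's own statement.
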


Next we consider piecewise uniform sequences of triangulations.
\begin{defn}
A family $\T_h$ of triangulations of the polytope $M$ is called
\emph{piecewise uniform} if there is a triangulation $\T$ of $M$
such that for each $h$, $\T_h$ is a refinement of $\T$ and for each
$T\in\T$ and each $h$, the restriction
of $\T_h$ to $T\in\Delta_n(T)$ is uniform.
\end{defn}
If, as in \cite{smits}, we start with an arbitrary triangulation of a polygon and refine it
by standard regular subdivision, the resulting sequence of triangulations
is piecewise uniform.  This is illustrated in Figure~\ref{f:puniform}.
The following theorem shows that $d^*$ is consistent for $1$-forms on 
piecewise uniform meshes, thus generalizing the main result of \cite{smits}
from $2$ to $n$ dimensions.
\begin{thm}\label{t:puniform}
Assume that the family of triangulations
$\{\T_h\}$ is a shape regular, quasiuniform, and piecewise uniform.
Let $u\in H^\ell \Lambda^1(M)$ be a $1$-form in the domain of $d^*$, where $\ell$ is the smallest integer satisfying $\ell >(n-1)/2$.
Then we have
\begin{equation}
\lim_{h\to 0}\|\exd^*u-\exd^*_h\pi_h u\| = 0.
\end{equation}
\end{thm}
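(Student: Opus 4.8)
The plan is to invoke Theorem~\ref{t:equiv}, which (granted the approximation property \eqref{approx}, valid here since the meshes are shape regular) reduces the claim to showing $\lim_h A_h(u)=0$. For $k=1$ the supremum defining $A_h(u)$ in \eqref{defA} runs over all $v_h\in\Lambda^0_h$, i.e.\ over continuous piecewise linear functions with \emph{no} boundary condition. So I must show that $\langle u-\pi_h u,\exd v_h\rangle$ is bounded by $o(1)\|v_h\|$ uniformly in $v_h$. The obstruction to applying Theorem~\ref{t:sup-con} directly is twofold: that estimate is proved only on a \emph{uniform} mesh, whereas $\{\T_h\}$ is merely piecewise uniform, and it requires test functions vanishing on the boundary, whereas $v_h$ is arbitrary. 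I would overcome both by a nodal splitting of $v_h$ relative to the coarse mesh. Let $\T$ be the fixed triangulation with respect to which $\{\T_h\}$ is piecewise uniform, and let $\Sigma$ be its $(n-1)$-skeleton (the union of the boundaries of the coarse simplices, which contains $\partial M$). Write $v_h = v_h^\circ + v_h^\partial$, where $v_h^\partial\in\Lambda^0_h$ agrees with $v_h$ at every fine vertex lying on $\Sigma$ and vanishes at every fine vertex interior to a coarse simplex, and $v_h^\circ=v_h-v_h^\partial$. Since $v_h^\circ$ vanishes at all fine vertices on $\Sigma$ and is piecewise linear, it vanishes on all of $\Sigma$; hence for each $T\in\Delta_n(\T)$ its restriction lies in $\Lambda^0_h\cap\ring H^1(T)$, and the fine mesh on $T$ is uniform. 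Using the equivalence of $\|\cdot\|$ with the $h^{n/2}$-scaled Euclidean norm of nodal values on shape regular, quasiuniform meshes, the splitting is stable: $\|v_h^\circ\|+\|v_h^\partial\|\le C\|v_h\|$.

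For the interior part I would apply Theorem~\ref{t:sup-con} on each coarse simplex $T$ (whose fine mesh is uniform) to the admissible test function $v_h^\circ|_T$, obtaining $|\langle u-\pi_h u,\exd v_h^\circ\rangle_{L^2(T)}|\le Ch^2\|u\|_{H^\ell\Lambda^1(T)}\|\exd v_h^\circ\|_{L^2(T)}$, with $C$ uniform over the finitely many coarse simplex shapes. Summing over $T$, applying Cauchy--Schwarz to the resulting sum, and using $\sum_T\|u\|^2_{H^\ell\Lambda^1(T)}=\|u\|^2_{H^\ell\Lambda^1(M)}$ together with $\sum_T\|\exd v_h^\circ\|^2_{L^2(T)}=\|\exd v_h^\circ\|^2$ gives a bound $Ch^2\|u\|_{H^\ell\Lambda^1}\|\exd v_h^\circ\|$. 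The inverse inequality \eqref{e:inv}, quasiuniformity, and stability then yield $\|\exd v_h^\circ\|\le Ch^{-1}\|v_h^\circ\|\le Ch^{-1}\|v_h\|$, so the interior contribution is $O(h)\|u\|_{H^\ell\Lambda^1}\|v_h\|$.

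For the boundary part, $v_h^\partial$ is supported in the layer $L=L(h)$ made up of the fine simplices meeting $\Sigma$, so $L\subset\{\,\dist(\cdot,\Sigma)\le Ch\,\}$ and $|L|=O(h)$. By Cauchy--Schwarz and the projection estimate \eqref{proj-estimate} summed over the fine simplices in $L$, I get $|\langle u-\pi_h u,\exd v_h^\partial\rangle|\le \|u-\pi_h u\|_{L^2(L)}\|\exd v_h^\partial\|\le C\bigl(\sum_{m=1}^\ell h^m|u|_{H^m\Lambda^1(L)}\bigr)h^{-1}\|v_h\|$, again via \eqref{e:inv} and stability. Setting $\eta(h)=\sum_{m=1}^\ell|u|_{H^m\Lambda^1(L(h))}$, the explicit factor $h$ cancels the $h^{-1}$, leaving the boundary contribution bounded by $C\eta(h)\|v_h\|$. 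Since each coefficient $D^\alpha u$ lies in $L^2$ and $|L(h)|\to0$, absolute continuity of the Lebesgue integral forces $\eta(h)\to0$; for smooth $u$ one even gets $\eta(h)=O(h^{1/2})$ from an $L^\infty$ bound times $|L|^{1/2}$.

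Combining the two pieces yields $A_h(u)\le Ch\|u\|_{H^\ell\Lambda^1}+C\eta(h)\to0$, which by Theorem~\ref{t:equiv} is the assertion. The hard part is precisely the interface term $v_h^\partial$: the superconvergence estimate is unavailable for it, and the inverse inequality costs a full factor $h^{-1}$. The key to the argument is that $v_h^\partial$ lives in a layer of vanishing measure $O(h)$ about the coarse skeleton, so that the first-order projection error on that layer — sharpened by the smallness of $|u|_{H^m\Lambda^1(L)}$ on the shrinking set — more than compensates for the $h^{-1}$ from the inverse inequality. The remaining technical points to check are the stability of the nodal splitting and the uniformity of the constant in Theorem~\ref{t:sup-con} across the coarse simplices.
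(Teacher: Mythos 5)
Your proposal is correct and takes essentially the same route as the paper's own proof: reduction to $A_h(u)\to 0$ via Theorem~\ref{t:equiv}, a nodal splitting of $v_h$ into interior parts vanishing on the coarse skeleton (to which Theorem~\ref{t:sup-con} applies on each coarse simplex, followed by the inverse estimate \eqref{e:inv}) plus an interface part supported in the $O(h)$-volume layer $\Sigma_h$ (handled by Cauchy--Schwarz, the projection estimate \eqref{proj-estimate}, and the inverse estimate), together with $L^2$-stability of the splitting and absolute continuity of the integral over the shrinking layer. The only cosmetic differences are that the paper writes the interior part as a sum $\sum_{T\in\T}v_h^T$ of per-simplex functions rather than a single function $v_h^\circ$, and your Cauchy--Schwarz over coarse simplices replaces the paper's term-by-term bound; your closing remark on the $O(h^{1/2})$ rate for smooth $u$ matches the paper's remark following the theorem.
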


\begin{proof}
Let $\T$ denote the triangulation of $M$ with respect to which the triangulations $\T_h$ are uniform.
We will apply Theorem \ref{t:sup-con} to the uniform mesh sequences obtained by
restricting $\T_h$ to each $T\in\T$.  To this end, let $K=\bigcup_{T\in\T}\partial T$ denote the
skeleton of $\T$,
and set
$$
\Sigma_h = \bigcup \{\,T\in \T_h\,|\,T\cap K\ne\emptyset\,\}.
$$
We can decompose an arbitrary function $v_h\in \Lambda_h^0$ as
\begin{equation}\label{e:decomp}
v_h=w_h+\sum_{T\in\T} v_h^T,
\end{equation}
where $w_h\in\Lambda^0_h$ is supported in $\Sigma_h$ and $v^T_h\in\Lambda^0_h$ is supported in $T$.
Indeed, we just take $w_h$ to coincide with $v$ at the vertices of the triangulation contained in
$K$ and to vanish at the other vertices, while $v^T_h=v$ at the vertices
in the interior of $T$ and vanishes at the other vertices. 
Because the mesh family is shape regular and quasiuniform, there exist positive constants
$C,c$ such that
$$
c\|v\|^2 \le h \sum_{x\in\Delta_0(\T_h)} |v(x)|^2 \le C\|v\|^2, \quad v\in\Lambda^0_h,
$$
from which we obtain the stability bound
\begin{equation}\label{e:l2-stab}
\|w_h\| + \sum_{T\in\T}\|v^T_h\| \le C\|v_h\|.
\end{equation}

Using the decomposition \eqref{e:decomp} of $v_h$ we get
\begin{equation}
\begin{split}
|\langle \pi_hu-u, \exd v_h \rangle| 
&\leq 
|\langle \pi_hu-u, \exd w_h \rangle| + \sum_{T\in\T}|\langle \pi_h u-u, \exd v_h^T \rangle|\\
&\leq 
C h\|u\|_{H^\ell\Lambda^1(\Sigma_h)}\|\exd w_h\|
+ Ch^2\sum_{T\in\T}\|u\|_{H^\ell \Lambda^1(M)}\|\exd v_h^T\|\\
&\leq 
C \|u\|_{H^\ell\Lambda^1(\Sigma_h)}\|w_h\|_{L^2(\Sigma_h)}
+ Ch\sum_{T\in\T}\|u\|_{H^\ell \Lambda^1(M)}\|v_h^T\|\\
&\leq 
C \left( \|u\|_{H^\ell \Lambda^1(\Sigma_h)} + h\|u\|_{H^\ell \Lambda^1(M)} \right) \|v_h\|,
\end{split}
\end{equation}
where we have used the Cauchy--Schwarz inequality,
the projection error estimate \eqref{proj-estimate},
the second order estimate \eqref{e:h2-bd} (which holds on the uniform meshes
on each $T$),
the inverse estimate of \eqref{e:inv}, and the $L^2$-stability bound \eqref{e:l2-stab}.
Since the volume of $\Sigma_h$ goes to $0$ as $h\to0$, so does
$\|u\|_{H^\ell \Lambda^1(\Sigma_h)}$.  Thus $A_h(u)$ vanishes with $h$,
and the desired result is a consequence of Theorem~\ref{t:equiv}.
\end{proof}

\begin{remark}
The preceding proof shows that as long as the triangulation is mostly uniform,
in the sense that the volume of the defective region goes to $0$ as $h\to0$,
we obtain consistency.
One can also extract information on the convergence rate.
For instance, using the fact  that $\Sigma_h$ is $O(h)$,
we obtain $\|u\|_{H^\ell \Lambda^1(\Sigma_h)}\leq C\sqrt{h}\|u\|_{C^\ell \Lambda^1}$ for $u\in C^\ell \Lambda^1(M)$.
\end{remark}

\section{Computational experiments for $1$-forms}\label{s:experiments}
In this section, we present numerical computations confirming the consistency of $d^*_h$
for $1$-forms on uniform and
piecewise uniform meshes in $2$ and $3$ dimensions, and other computations confirming its
inconsistency on more general meshes.
The four tables in this section display the results of computations with various
mesh sequences.  In each case we show the maximal simplex diameter $h$,
the number of simplices in the mesh, the consistency error $\|d^*_h\pi_h f - d^*f\|$,
and the apparent order inferred from the ratio of consecutive errors.
All computations were performed using the FEniCS finite element software library
\cite{LoggMardalEtAl2012a}.

The first two tables concern the problem on the square
described in Section~\ref{s:counterexample}, i.e., the approximation of $d^*u$
where $u=(1-x^2)dx$.
Table~\ref{tb:unstruct-2d} shows the results when
the piecewise uniform mesh sequence shown in Figure~\ref{f:puniform} is
used for the discretization.  Notice that the consistency error clearly tends
to zero as $O(h)$.
\begin{table}[ht]
\caption{When computed using the $2$-dimensional piecewise uniform mesh sequence of Figure~\ref{f:puniform},
the consistency error tends to $0$.}
\label{tb:unstruct-2d}
\begin{tabular}{rrrrr}
\multicolumn{1}{c}{$n$} & \multicolumn{1}{c}{$h$} & \multicolumn{1}{c}{triangles} & \multicolumn{1}{c}{error} & \multicolumn{1}{c}{order} \\
\hline
  1 &  5.00e$-$01 &     20  & 6.25e$-$01 & \\
  2 &  2.50e$-$01 &    80   & 3.08e$-$01   & 1.02 \\
  3 &  1.25e$-$01 &    320 &  1.56e$-$01  &  0.98 \\
  4 &  6.25e$-$02 &   1,280  & 7.85e$-$02 &   0.99 \\
  5 &  3.12e$-$02 &   5,120  & 3.94e$-$02  &  1.00 \\
  6 &  1.56e$-$02 &  20,480 &  1.97e$-$02 &   1.00 \\
\hline
\end{tabular}
\end{table}

By contrast, Table~\ref{tb:standard-2d} shows the counterexample described analytically
in Section~\ref{s:counterexample},
using the mesh sequence of Figure~\ref{f:crisscross}, obtained by standard subdivision.  In this case, the consistency
error does not converge to zero, as is clear from the computations.
\begin{table}[ht]
\caption{With the mesh sequence of Figure~\ref{f:crisscross}, the consistency
error does not tend to $0$.}
\label{tb:standard-2d}
\begin{tabular}{rrrrr}
 \multicolumn{1}{c}{$n$} & \multicolumn{1}{c}{$h$} & \multicolumn{1}{c}{triangles} & \multicolumn{1}{c}{error} & \multicolumn{1}{c}{order} \\
\hline
  1 &  5.00e$-$01 &     16  & 1.15 & \\
  2 &  2.50e$-$01 &     64  & 1.50  & $-$0.38 \\
  3 &  1.25e$-$01 &    256 & 1.60  & $-$0.09 \\
  4 &  6.25e$-$02 &   1,024 & 1.62 &  $-$0.02 \\
  5 &  3.12e$-$02 &   4,096 & 1.63 &  $-$0.01 \\
  6 &  1.56e$-$02 &  16,384 &  1.63 &  $-$0.00 \\
\hline
\end{tabular}
\end{table}

Similar results hold in $3$ dimensions. We computed
the error in $d^*_hu$ on the cube $(-1,1)^3$, where again
$u$ is given by $(1-x^2)dx$.  We calculated with two mesh sequences, both
starting from a partition of the cube into
six congruent tetrahedra, all sharing a common edge along the diagonal
from $(-1,-1,-1)$ to $(1,1,1)$.
We constructed the first mesh sequence by regular subdivision, yielding the meshes
shown in Figure~\ref{f:meshes-3d-reg}.  These are uniform meshes,
and the numerical results given in  Table~\ref{tb:regular-3d} clearly
demonstrate consistency.
For the second mesh sequence we applied
standard subdivision, obtaining the sequence of structured but non-uniform triangulations
shown in Figure~\ref{f:meshes-3d-std}.  In this case $d^*_h$ is
inconsistent.  See Table~\ref{tb:standard-3d}.

\begin{figure}[htb]
\centerline{%
 \begin{tabular}{cc}
  \includegraphics[width=1.2in]{figures/unit_cube_6_tetrahedra_regular_subdivision/mesh1.png} &
  \includegraphics[width=1.2in]{figures/unit_cube_6_tetrahedra_regular_subdivision/mesh2.png} \\
  \includegraphics[width=1.2in]{figures/unit_cube_6_tetrahedra_regular_subdivision/mesh3.png} &
  \includegraphics[width=1.2in]{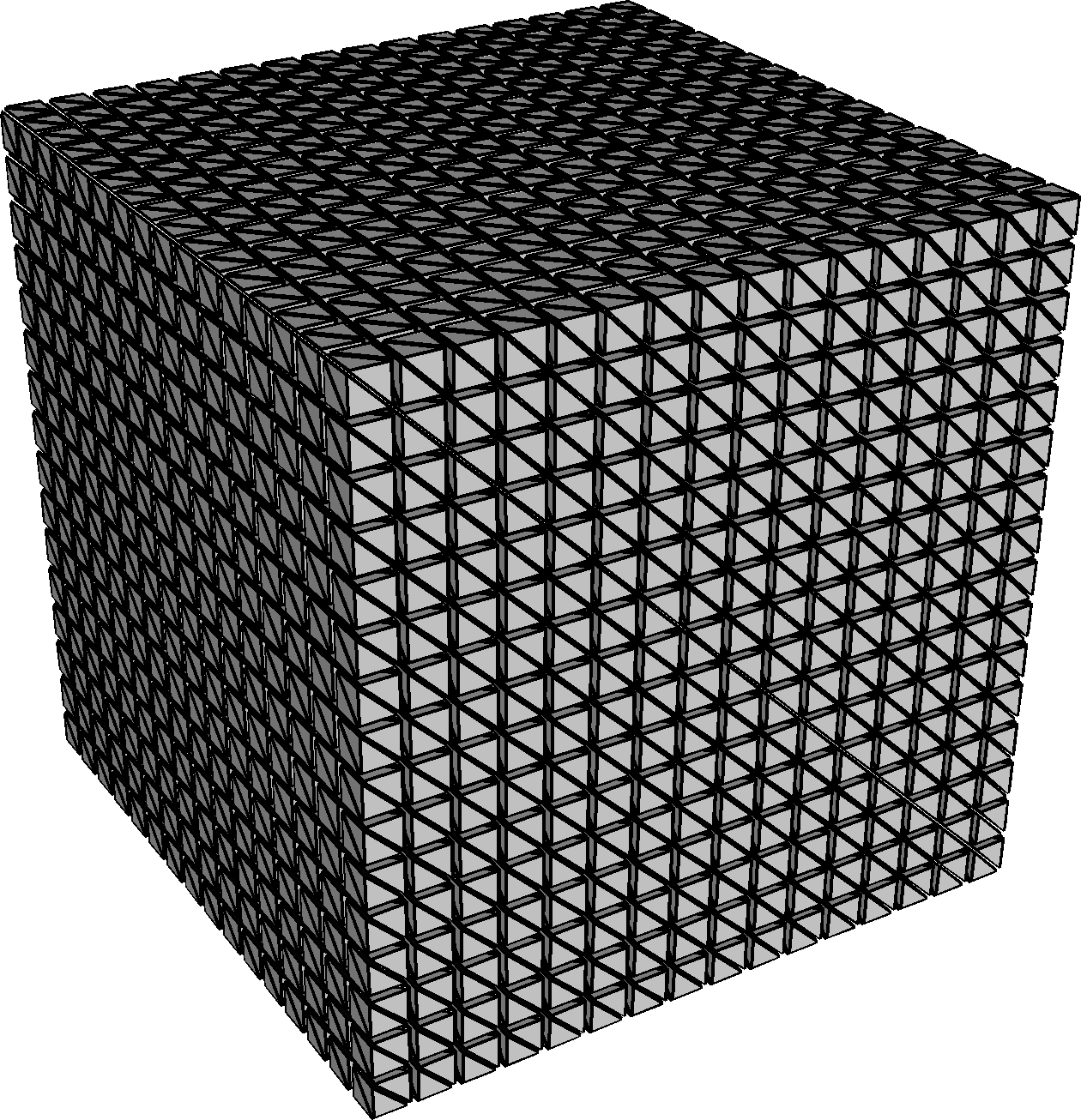}
 \end{tabular}}
\caption{Uniform mesh sequence in 3D, obtained by regular subdivision.}
\label{f:meshes-3d-reg}
\end{figure}

\begin{figure}[htb]
\centerline{%
 \begin{tabular}{cc}
  \includegraphics[width=1.2in]{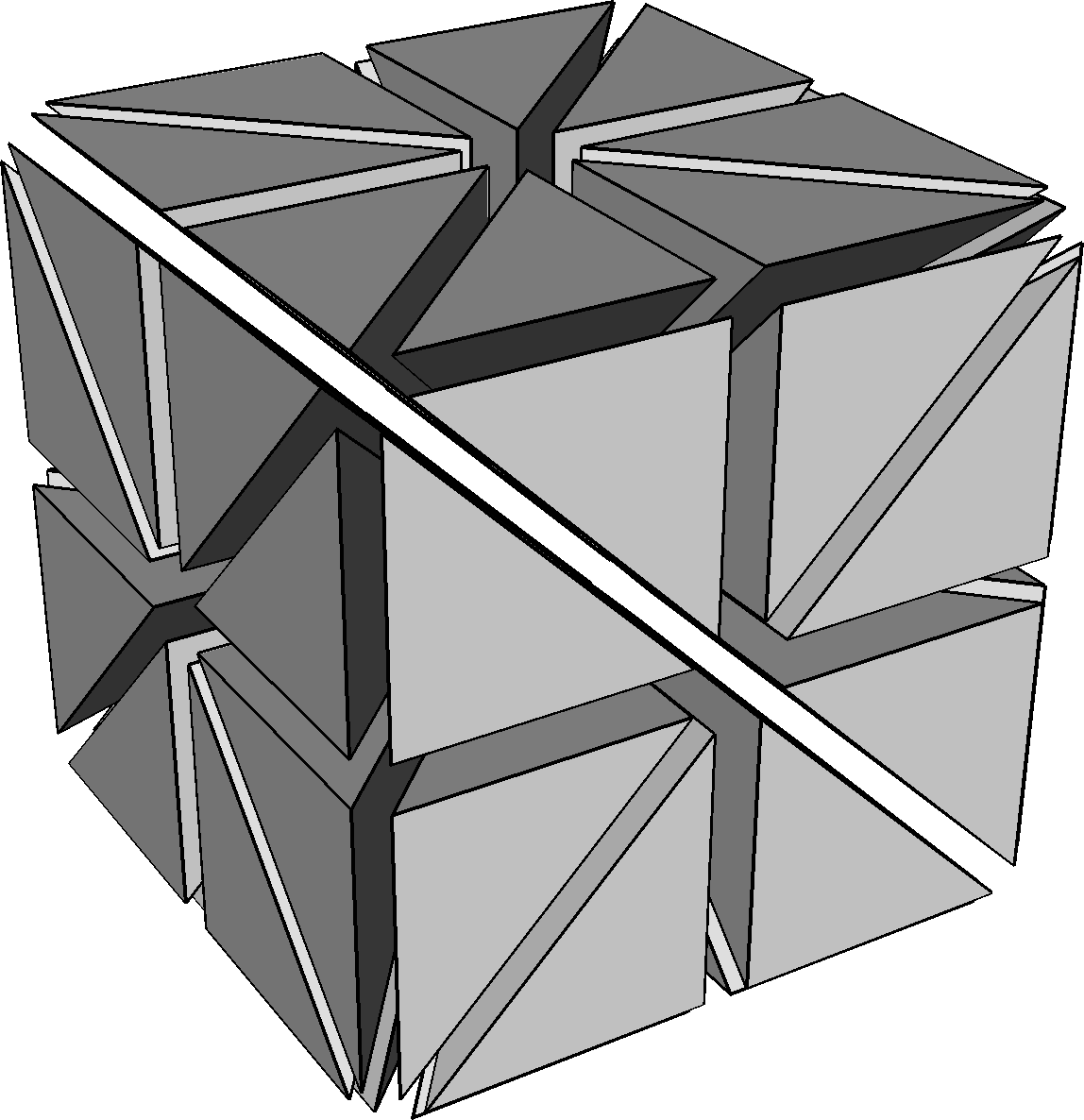} &
  \includegraphics[width=1.2in]{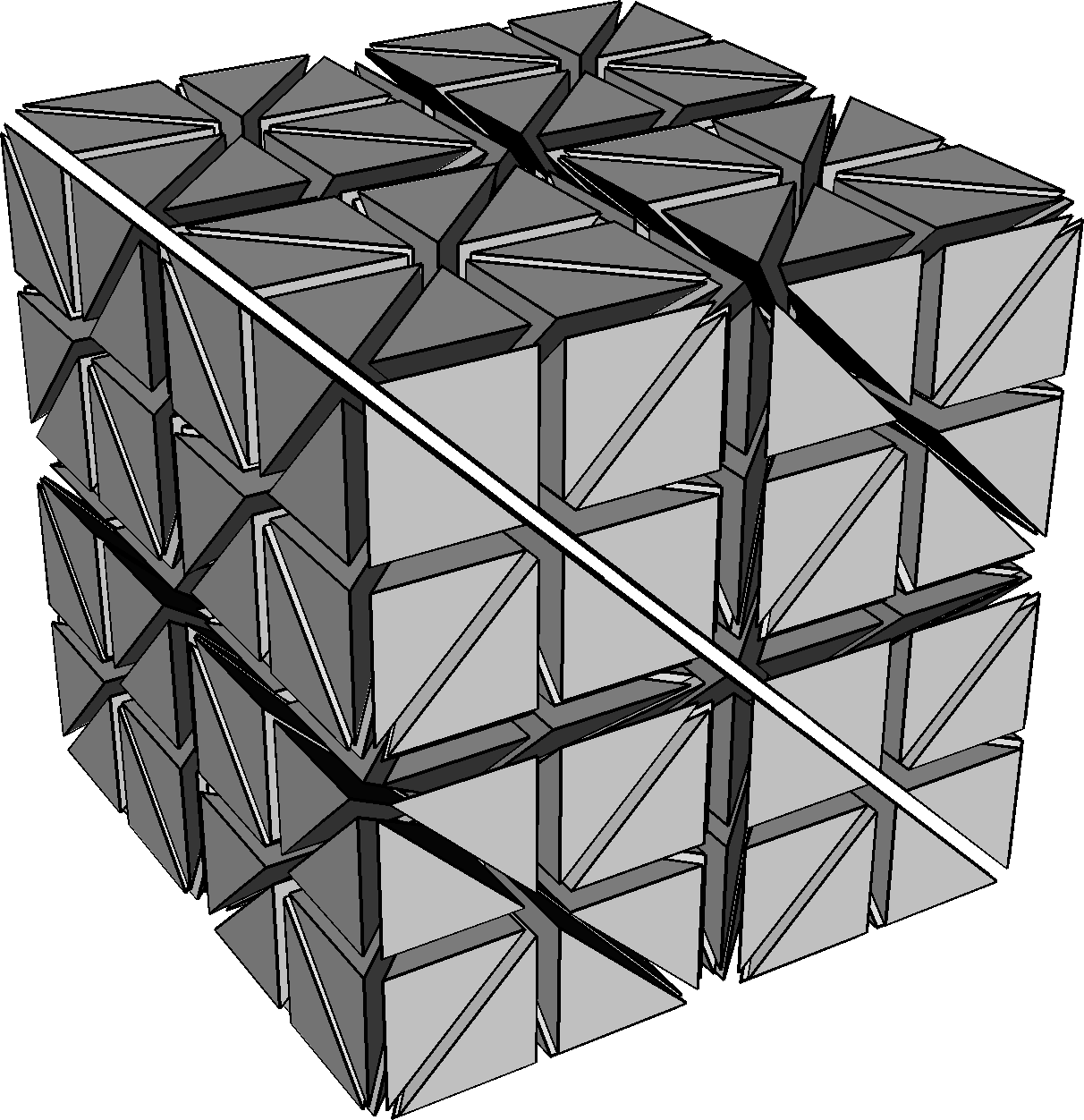} \\
  \includegraphics[width=1.2in]{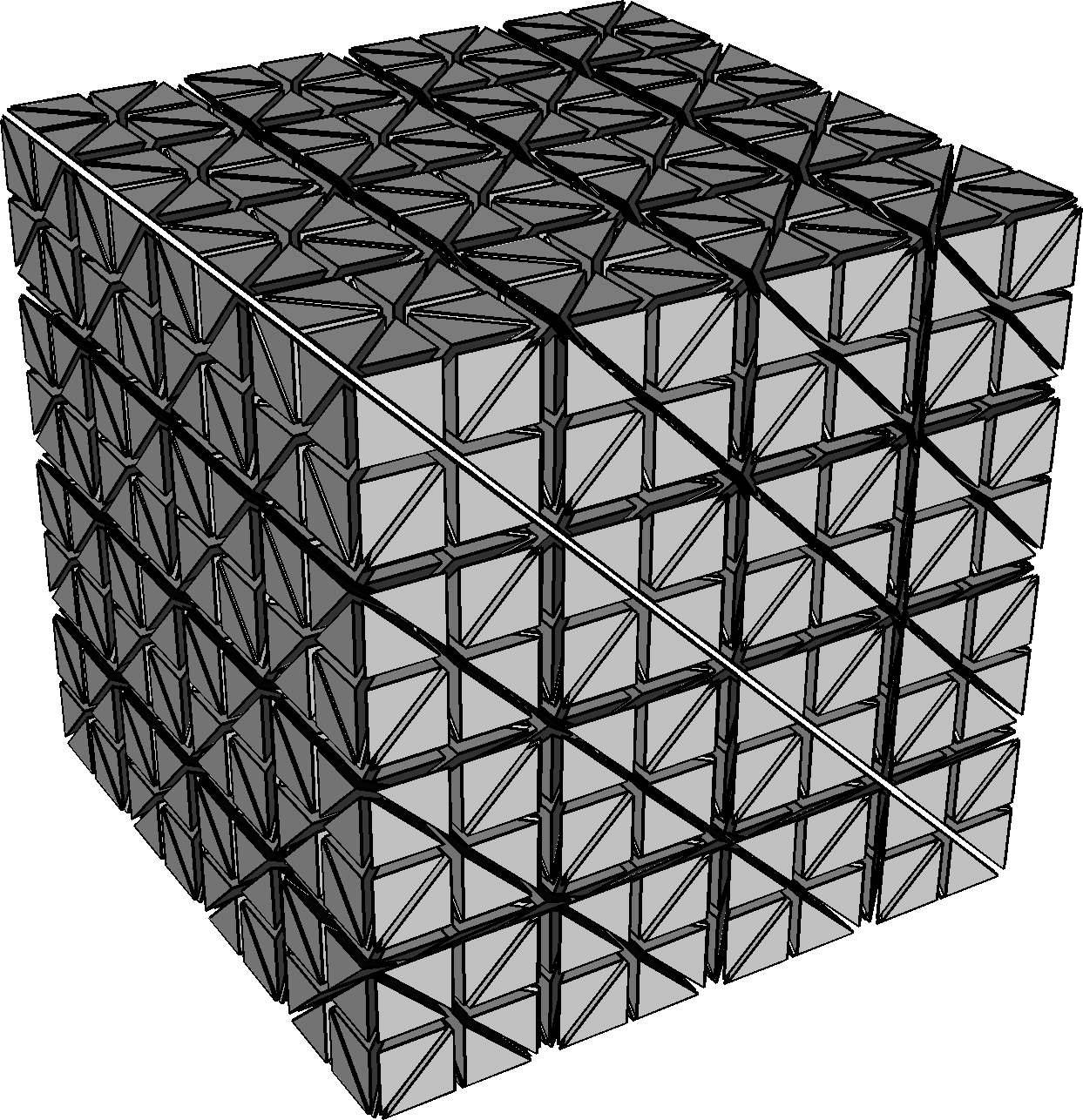} &
  \includegraphics[width=1.2in]{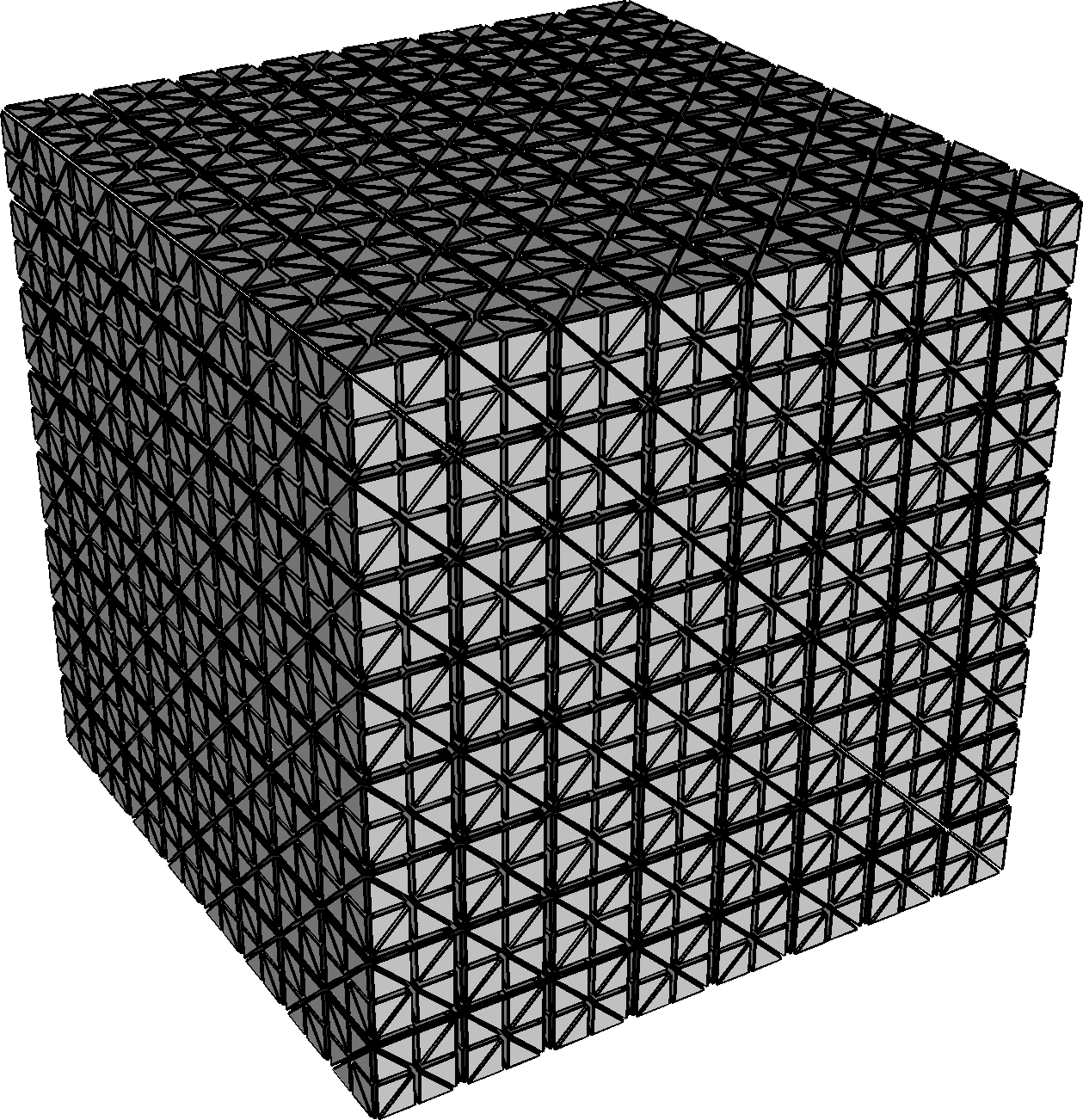}
 \end{tabular}}
\caption{As in 2D, the mesh sequence in 3D obtained by standard subdivision is not uniform.}
\label{f:meshes-3d-std}
\end{figure}

\begin{table}[ht]
\caption{The consistency error for $d^*_h$ on $1$-forms in $3$D tends
to zero when using
the uniform mesh sequence of Figure~\ref{f:meshes-3d-reg}.}
\label{tb:regular-3d}
\begin{tabular}{rrrrr}
 \multicolumn{1}{c}{$n$} & \multicolumn{1}{c}{$h$} & \multicolumn{1}{c}{tetrahedra} & \multicolumn{1}{c}{error} & \multicolumn{1}{c}{order} \\
\hline
  1 &  1.00e+00 &     48 &  1.69e+00 &\\
  2  & 5.00e$-$01  &   384 & 9.70e$-$01   & 0.80\\
  3  & 2.50e$-$01  &  3,072 &  5.13e$-$01  &  0.92\\
  4  & 1.25e$-$01  & 24,576 &  2.63e$-$01  &  0.96\\
  5  & 6.25e$-$02  & 196,608 &  1.33e$-$01 &   0.98\\
  6  & 3.12e$-$02  & 1,572,864 &  6.69e$-$02 &   0.99\\
\hline
\end{tabular}
\end{table}

\begin{table}[ht]
\caption{The consistency error for $d^*_h$ on $1$-forms in $3$D, using
the non-uniform mesh sequence of Figure \ref{f:meshes-3d-std},
does not tend to zero.}
\label{tb:standard-3d}
\begin{tabular}{rrrrr}
 \multicolumn{1}{c}{$n$} & \multicolumn{1}{c}{$h$} & \multicolumn{1}{c}{tetrahedra} & \multicolumn{1}{c}{error} & \multicolumn{1}{c}{order} \\
\hline
  0 &  1.00e+00 &     48  & 1.81e+00 &\\
  1 &  5.00e$-$01  &   384  & 2.71e+00  & $-$0.58\\
  2 & 2.50e$-$01   & 3,072  & 3.02e+00  & $-$0.16\\
  3 & 1.25e$-$01   & 24,576 &  3.11e+00 &   $-$0.04\\
  4 & 6.25e$-$02   & 196,608 &   3.13e+00 &  $-$0.01\\
\hline
\end{tabular}
\end{table}

\section{Inconsistency for $2$-forms in $3$ dimensions}\label{s:2-forms}
We have seen that for $1$-forms, $d^*_h$ is consistent if computed using
piecewise uniform mesh sequences, but not with general mesh
sequences.  It is also easy to see that consistency holds for $n$-forms in $n$-dimensions
for \emph{any} mesh sequence.  This is because
the canonical projection $\pi_h$ onto the Whitney $n$-forms (which are just
the piecewise constant forms) is the $L^2$ orthogonal projection.  Now if $v_h$ is a Whitney $(n-1)$-form,
then $dv_h$ is a Whitney $n$-form, so the inner product
$\<u-\pi_h u,dv_h\>=0$.  Thus $A_h(u)$, defined in \eqref{defA},
vanishes identically, and so $d^*_h$
is consistent by Theorem~\ref{t:equiv}.  Having understood the situation for $1$-forms and $n$-forms,
this leaves open the question of whether consistency holds for $k$-forms
with $k$ strictly between $1$ and $n$.  In this section we study $2$-forms in $3$ dimensions
and give numerical results indicating that $d^*_h$ is not consistent, even for uniform meshes.

Let $u=(1-x^2)(1-y^2)dx\wedge dy$, a $2$-form on the cube $M=(-1,1)^3$.
The corresponding vector field is $(0,0,(1-x^2)(1-y^2))$ which has
vanishing tangential components on $\partial M$.  Therefore $u$ belongs
to the domain of $d^*$ and $d^*u$ is the $1$-form corresponding to $\curl u$,
i.e., $d^*u = -2(1-x^2)ydx+2x(1-y^2)dy$.
Table~\ref{tb:2-forms-3d} shows the consistency error $\|d^*_h\pi_h u - d^*u\|_{L^2\Lambda^1}$
computed using the sequences of uniform meshes displayed in Figure \ref{f:meshes-3d-reg}.
This mesh sequence yields a consistent approximation of $d^*h$ for 1-forms, but the experiments
clearly indicate that this is not so for 2-forms.

\begin{table}[ht]
\caption{The consistency error does not tend to zero for $2$-forms, even on
a uniform mesh sequence.}
\label{tb:2-forms-3d}
\begin{tabular}{rrrrr}
 \multicolumn{1}{c}{$n$} & \multicolumn{1}{c}{$h$} & \multicolumn{1}{c}{triangles} & \multicolumn{1}{c}{error} & \multicolumn{1}{c}{order} \\
\hline
 1 &  1.00e+00 &     48  & 1.59e+00 & \\
 2 &  5.00e$-$01  &  384   & 1.18e+00 &   0.43 \\
 3 &  2.50e$-$01  &  3072 &  1.00e+00 &   0.24 \\
 4 &  1.25e$-$01  & 24576 &  9.47e$-$01 &   0.08 \\
 5 &  6.25e$-$02 & 196608 &  3.37e+00 &   $-$1.83 \\
\hline
\end{tabular}
\end{table}

\bibliographystyle{amsplain}
\bibliography{dstar}

\end{document}